\documentclass[10pt]{article}

\usepackage{amsfonts} 
\addtolength{\headheight}{2pt}
\usepackage{amsmath,hhline,latexsym,mathrsfs,amsthm} 
\usepackage{amssymb}
\usepackage{relsize}
\DeclareMathAlphabet\mathbfcal{OMS}{cmsy}{b}{n}
\usepackage{mathtools}
\usepackage{dsfont}
\usepackage{marginnote}
\usepackage{listings}
\lstset{
escapeinside = {\%*}{*)},
xleftmargin = 0cm,
xrightmargin = -1cm,
frame = tlbr,
commentstyle = \color{Green},
numbers = left,
numbersep = 8pt,
numberstyle = \tiny\color{black!50},
stringstyle = \color{FireBrick},
emph = {problem, wait, value, fill, cmm, qft, ofstream, real, mesh, fespace, qf5pT, HCT, P1},
emphstyle = {\color{blue}},
keywordstyle = \color{RoyalBlue},
showstringspaces=false,
}

\usepackage{algorithm2e}
\RestyleAlgo{ruled}

\usepackage{tikz}

\usepackage{hyperref,enumitem}
\usepackage{cleveref}

\usepackage{subcaption}
\usepackage{xcolor}
\usepackage{color}
\usepackage{graphicx}
\usepackage{comment}

\renewcommand{\leq}{\leqslant}

\renewcommand{\geq}{\geqslant}

\usepackage[english]{babel} 
\usepackage{geometry}
\geometry{hmargin=2.5cm,vmargin=2.5cm}
\usepackage{xcolor}
\usepackage{color}
\usepackage[colored]{shadethm}
\definecolor{shadethmcolor}{gray}{0.85}
\definecolor{shaderulecolor}{gray}{0.85}

\definecolor{asparagus}{rgb}{0.53, 0.66, 0.42}
\definecolor{bananamania}{rgb}{0.98, 0.91, 0.71}
\definecolor{camel}{rgb}{0.76, 0.6, 0.42}
\definecolor{lavenderblue}{rgb}{0.8, 0.8, 1.0}
\definecolor{deepsaffron}{rgb}{1.0, 0.6, 0.2}

\def\dx{\,\textnormal{d}x}
\def\dt{\textnormal{d}t}

\def\ds{\textnormal{d}s}
\def\d{\,\textnormal{d}}

\title{An observability estimate for the wave equation and applications to the Neumann boundary controllability for semi-linear wave equations}

\author{
 Sue Claret\thanks{Universit\'e Clermont Auvergne, CNRS, LMBP, F-63000 Clermont-Ferrand, France; sue.claret@uca.fr.}
} 
 
\begin{document}
\maketitle 

\newshadetheorem{theoreme}{Theorem}
\newshadetheorem{lemme}{Lemma}
\newshadetheorem{proposition}{Proposition}
\newshadetheorem{remark}{Remark}
\newshadetheorem{corollaire}{Corollary}
\newshadetheorem{definition}{Definition}
\newshadetheorem{def/prop}{Definition/Proposition}
\newshadetheorem{It1}{Theorem 8 - item }
\newshadetheorem{problem}{Problem}

\newshadetheorem{LSalgo}{Least-square algorithm}

\begin{abstract}

We give a boundary observability result for a $1$d wave equation with a potential. We then deduce with a Schauder fixed-point argument the existence of a Neumann boundary control for a semi-linear wave equation $\partial_{tt}y - \partial_{xx}y + f(y) = 0$ under an optimal growth assumption at infinity on $f$ of the type $s\ln^2s$. Moreover, assuming additional assumption on $f'$, we construct a minimizing sequence which converges to a control. Numerical experiments illustrate the results.  This work extends to the Neumann boundary control case the work of Zuazua in $1993$ and the work of Münch and Trélat in $2022$.
\end{abstract}
		
\textbf{AMS Classifications:} 35L71, 93B05.

\textbf{Keywords:} Observability inequality, semi-linear wave equation, exact boundary controllability, fixed-point method, least-squares approach.

\section{Introduction and main results}

Let $\Omega := (0,1)$, $T>0$ and $Q_T := \Omega\times (0,T)$. We define the Hilbert space $H^1_{(0)}(\Omega) := \left\{ z\in H^1(\Omega); \ z(0) = 0\right\}$ endowed with the norm $\left\|\cdot\right\|_{H^1_{(0)}(\Omega)} := \left\|\partial_x\ \cdot \right\|_{L^2(\Omega)}$. We denote by $H^{-1}_{(0)}(\Omega)$ the dual space of $H^1_{(0)}(\Omega)$ equipped with the dual norm \begin{equation*}
\|w\|_{H^{-1}_{(0)}(\Omega)}:= \sup\limits_{z\in H^1_{(0)}(\Omega)\backslash\{0\}}\frac{\langle w, z\rangle_{-1,1}}{\|z\|_{H^1_{(0)}(\Omega)}},
\end{equation*} where $\langle \cdot, \cdot \rangle_{-1, 1}$ denotes the dual product between $H^{-1}_{(0)}(\Omega)$ and $H^1_{(0)}(\Omega)$. For any $A\in L^\infty(Q_T)$ and $B\in L^2(Q_T)$, we consider the following linear wave equation
\begin{equation}\tag{$\star$}\label{system_adjoint}
\left\lbrace 
    	\begin{aligned}
    	&\partial_{tt} \varphi - \partial_{xx}\varphi + A\varphi = B, &  Q_T, 
    	\\ &\varphi(0,t)=0, \ \partial_x\varphi(1,t)= 0, & (0,T),
    	\\ & \big( \varphi(\cdot,0),\partial_t \varphi(\cdot, 0)\big)  = (\varphi_0, \varphi_1), & \Omega,
    	\end{aligned}
    	\right. 
\end{equation}
where $\varphi = \varphi(x,t)$ is a state and $(\varphi_0, \varphi_1)\in L^2(\Omega)\times H^{-1}_{(0)}(\Omega)$ is a given initial data.  Then, \eqref{system_adjoint} admits a unique solution in the sense of transposition in $\mathcal{C}^0\big([0,T]; L^2(\Omega)\big)\cap\mathcal{C}^1\big([0,T]; H^{-1}_{(0)}(\Omega)\big)$, we refer to Definition \ref{def_sol_transpo} and Theorem \ref{Theorem_reg_donnee_non_reg}.
This paper is devoted to the boundary observability problem corresponding to \eqref{system_adjoint}. Precisely, our main result is as follows

\begin{theoreme}\label{Main_result_Obs_inegality}
Let $T>2$, $A\in L^\infty(Q_T)$ and $B\in L^2(Q_T)$. Then, for any $\varphi$ solution in the sense of transposition of \eqref{system_adjoint}, there exists $C_\text{obs} = C(\Omega, T, A)>0$ such that
\begin{equation}\tag{$$Obs$$}\label{In.obs.g}
\left\|\left(\varphi(\cdot, 0),\partial_t\varphi(\cdot, 0)\right)\right\|^2_{L^2(\Omega)\times H^{-1}_{(0)}(\Omega)}\leq C_{\text{obs}} \left( \left\| \varphi(1,\cdot)\right\|^2_{L^2(0,T)} +\left\| \partial_{tt}\varphi - \partial_{xx}\varphi + A\varphi \right\|^2_{L^2(Q_T)} \right).
\end{equation}
Moreover, there exists $C = C(\Omega, T)>0$ such that
\begin{equation*}
C_{\text{obs}} = Ce^{C\sqrt{\|A\|_{L^\infty(Q_T)}}}.
\end{equation*}
\end{theoreme}
Estimate $\eqref{In.obs.g}$ has been known for many years, see \cite[Theorem $3.1$ $(b)$ p. $271$ or Theorem $3.4$ $(b)$ p. $274$]{lasiecka1989exact} , \cite[Chapter $19$, Lemma $19.A.1$ p. $324$]{leugering2005control} or \cite[Proposition $2.60$, $2.61$ p. $74$]{coron2007control}. In the above results, the constant $C_{\text{obs}}$ depends on the potential $A$. However, the explicit estimate of $C_{\text{obs}}$ as a function of the potential $A$ as in \cite[Theorem $4$ p. $120$]{zuazua1993exact} or \cite[Theorem $3.1$, $3.2$]{doi:10.1137/S0363012999350298} is a part of the problem and it is exactly the main novelty of this paper. The proof is based on the same method as in \cite[Section $3$]{zuazua1993exact}. In particular, the one-dimensional character is used in a fundamental way. 

By duality arguments, boundary observability estimates are equivalent to boundary controllability properties. In particular here, one motivation for Theorem \ref{Main_result_Obs_inegality}, is the exact boundary controllability of the following semi-linear wave equation
\begin{equation}\tag{$\star\star$}\label{system_semilinear}
\left\lbrace 
    	\begin{aligned}
    	&\partial_{tt} y - \partial_{xx}y + f(y) = 0, &  Q_T, 
    	\\ &y(0,\cdot)=0, \ \partial_xy(1,\cdot)= v, & (0,T),
    	\\ & \big( y(\cdot,0),\partial_t y(\cdot, 0)\big)  = (u_0, u_1), & \Omega,
    	\end{aligned}
    	\right. 
\end{equation}
where $(u_0, u_1)\in H^1_{(0)}(\Omega)\times L^2(\Omega)$ is a given initial data, $v$ is a control function and $f\in\mathcal{C}^1(\mathbb{R})$ is a non-linear function. The problem of exact controllability associated with $(\star\star)$ is the following one:

\vspace{0.3cm}

\textit{Given a controllability time $T>0$ large enough, for any initial data $(u_0, u_1)$ and any final data $(z_0, z_1)$ in $H^1_{(0)}(\Omega)\times L^2(\Omega)$, find a control function $v\in L^2(0,T)$ and $y\in \mathcal{C}^0\big([0,T]; H^1_{(0)}(\Omega)\big)\cap \mathcal{C}^1\big([0,T]; L^2(\Omega)\big)$ solution of $(\star\star)$ such that
\begin{equation}\label{Target}
\big( y(\cdot, T), \partial_ty(\cdot, T)\big) = (z_0, z_1) \hspace{1cm} \text{ in } \Omega.
\end{equation}}

\vspace{0.3cm}

The controllability problem of the non-linear wave equation has essentially been studied in the case of distributed control with mainly Dirichlet boundary conditions. The first work on the exact controllability of a finite-dimensional non-linear wave equation is due to Markus in \cite{doi:10.1137/0303008} by using an implicit function theorem. The above method was then applied to obtain local controllability results on non-linear wave equation in \cite{Fattorini1975LocalCO} or \cite{doi:10.1137/0314002}. After that, the global controllability problem for semi-linear wave equation 
is studied under the following growth assumption on the non-linearity :
\begin{equation}\tag{$$H$$}\label{Assumption_on_f_with_p}
\exists \beta>0 \text{ small enough such that } \limsup\limits_{|r|\rightarrow +\infty} \frac{\left|f(r)\right|}{|r|\ln^p|r|} \leq \beta \text{ for some } p\geq0.
\end{equation}
In particular, by a Schauder fixed-point argument, Zuazua in \cite{zuazua1993exact} proves the first distributed controllability result for the 1d semi-linear wave equation under the assumption \eqref{Assumption_on_f_with_p} with $p=2$. Subsequently, the above result is generalized by the same approach to the multidimensional case with $p=1/2$ in \cite{li2000exact} and then with $p=3/2$ in \cite{fu2019carleman}. In the specific case of mixed boundary conditions with a Neumann control, to our knowledge, there are very few results. A global exact boundary controllability result for \eqref{system_semilinear}, under the assumption that the non-linearity $f$ is continuous with first derivative uniformly bounded, was given in \cite[Chapter $19$]{leugering2005control} in the space $H^1_{(0)}(\Omega)\times L^2(\Omega)$ for the multidimensional case. Another controllability result is proved in \cite[Theorem $4.22$ p. $178$]{coron2007control} with a Schauder fixed-point argument adapted from \cite{zuazua1993exact} assuming that $f$ is at most linear.

 A first consequence of Theorem \ref{Main_result_Obs_inegality} is the following
\begin{theoreme}\label{First_main_result(Schauder)}
Let $T>2$. Assume that $f\in\mathcal{C}^1(\mathbb{R})$ such that
\begin{equation}\tag{$$H1$$}\label{Hyp_H1}
\limsup\limits_{|r|\rightarrow +\infty} \frac{\left| f(r)\right|}{\left| r\right| \ln^2\left| r\right|} \leq \beta.
\end{equation}
If $\beta$ is small enough then the system \eqref{system_semilinear} is exactly controllable in time $T$. 
\end{theoreme}
The proof is based on a Schauder fixed-point argument. In particular, the stability property of the operator results from the observability inequality \eqref{In.obs.g} of Theorem \ref{Main_result_Obs_inegality} and the assumption \eqref{Hyp_H1} on $f$; which can be proved to be optimal in the power of the logarithm by the same argument as in \cite[Section $4$]{zuazua1993exact}.

Remark that Theorem \ref{First_main_result(Schauder)} and all the previous cited results only give the existence of a control function for $(\star\star)$ and is not based on a constructive method. It is only very recently that the question about the  construction of convergent control approximation has emerged: we refer to \cite{munch2022constructive, bottois2023constructive} using a least-squares approach or \cite{cavalcanti2022numerical, bhandari2023exact, claret2023exact} where a Picard iterative scheme is proposed. Under the assumption that the non-linearity is continuous in time and Lipschitz in space with a Lipschitz constant independent of time, we mention the back-and-forth iterations method of \cite{doi:10.1080/00207179.2016.1266513} which is illustrated in the case of the boundary controllability of the Sine-Gordon equation with a mixed boundary conditions.  Eventually, in \cite{doi:10.1142/S0219199706002209}, Coron and Trélat construct a control in a feedback form to move from any steady-state to any other one provided that they are in the same connected component of the set of steady-states.

Assuming an additional growth assumption on $f'$, a second consequence of Theorem \ref{Main_result_Obs_inegality} is the following
\begin{theoreme}\label{Second_main_result}
Let $T>2$. Assume that $f'$ is $\alpha$-Hölder continuous, for some $\alpha\in [0,1]$, and satisfies 
\begin{equation}\tag{$$H2$$}\label{Hyp_H2}
\limsup\limits_{\left|r\right| \rightarrow +\infty} \frac{\left| f'(r)\right|}{\ln^2\left|r\right|} < \beta^\star,
\end{equation}
with $\beta^\star>0$ small enough. In the case $\alpha = 0$, we assume moreover that $\|f'\|_{L^\infty(\mathbb{R})}$ is small enough. There exists a sequence $(y_k, v_k)_{k\in\mathbb{N}}$ which converges strongly, at least with order $1+\alpha$ after a finite number of iterations, to a state-control pair $(y,v)$ of \eqref{system_semilinear}.
\end{theoreme}

As in \cite{munch2022constructive, bottois2023constructive, lemoine2020approximation}, Theorem \ref{Second_main_result} is obtained by a least-squares approach, which consists to minimize the functional 
\begin{equation*}
E(y,v) := \left\|\partial_{tt}y - \partial_{xx}y + f(y) \right\|_{L^2(Q_T)}^2
\end{equation*}
over all the pair $(y,v)\in L^2(Q_T)\times L^2(0,T)$ satisfying $\left(y(\cdot, 0),\partial_ty(\cdot, 0) \right) = (u_0, u_1)$ and $\left( y(\cdot, T),\partial_ty(\cdot, T)\right) = (z_0,z_1)$. In particular, the observability inequality \eqref{In.obs.g} is used in a fundamental way and the assumption $\eqref{Hyp_H2}$ on $f'$ is again a consequence of the expression on the observability constant. 

\vspace{0.3cm}

\textbf{Outline.} The paper is organized as follows. Section \ref{Section_1} is devoted to giving some theoretical results for linear wave equation with potential and source term. In particular, we give the existence, uniqueness and \textit{a priori} estimates for weak solution in the regularity space $H^1_{(0)}(\Omega)\times L^2(\Omega)$ (see Section \ref{Section_1.1}) and for solution in the sense of transposition in the space $L^2(\Omega)\times H^{-1}_{(0)}(\Omega)$ (see Section \ref{Section_1.2}). In Section \ref{Section_1.3} we give the proof of the observability inequality \eqref{In.obs.g} given by Theorem \ref{Main_result_Obs_inegality}. We give then controllability results as an application of \eqref{In.obs.g} in Section \ref{Section3}. Section \ref{Section_3.1} is devoted to controllability of a linear wave equation, while Section \ref{Section_3.2} and Section \ref{Section_3.3} are devoted to Theorem \ref{First_main_result(Schauder)} and Theorem \ref{Second_main_result} respectively. Finally, we present some numerical simulations in Section \ref{Section4} to illustrate our results and, we conclude and give some perspectives in Section \ref{Section5}.

\section{Linear wave equation with mixed boundary condition}\label{Section_1}

\subsection{Existence and uniqueness of solutions in the space $H^1_{(0)}(\Omega)\times L^2(\Omega)$}\label{Section_1.1}

Let $T>2$, $B\in L^1\big(0,T; L^2(\Omega)\big)$, $v\in L^2(0,T)$ and $A\in L^\infty(Q_T)$. We consider the following linear wave equation 
\begin{equation}\label{system_linear}
\left\lbrace 
    	\begin{aligned}
    	&\partial_{tt} y - \partial_{xx} y + Ay = B, &  Q_T, \\ &y(0,\cdot) = 0, \ \ \partial_x y(1,\cdot) = v,   & (0,T) , \\ & \big( y(\cdot,0),\partial_t y(\cdot, 0)\big)  = (u_0, u_1), & \Omega,
    	\end{aligned}
    	\right.
\end{equation}
where $y = y(x,t)$ is the state and $(u_0, u_1)\in H^1_{(0)}(\Omega)\times L^2(\Omega)$ is a given initial data.

We define the solutions of \eqref{system_linear} in a weak sense. 

\begin{definition}\label{Def_weak_sol}
We say that $y\in L^2\big(0,T; H^1_{(0)}(\Omega)\big) \cap H^1\big(0,T; L^2(\Omega)\big)\cap H^2\big(0,T; H^{-1}_{(0)}(\Omega)\big)$ is a \textbf{weak solution} of \eqref{system_linear} if and only if $y$ is solution of the variational formulation 
\begin{equation}\label{FV_weak_solution_y}
\langle \partial_{tt}y(\cdot, t), z \rangle_{H^{-1}_{(0)}(\Omega), H^1_{(0)}(\Omega)} - v(t)z(1) + \int_\Omega \partial_x y \partial_x z \dx + \int_\Omega Ayz \dx = \int_\Omega Bz\dx, \hspace{1cm} \forall z\in H^1_{(0)}(\Omega),
\end{equation}
for a.e. $t\in [0,T]$ and $\left( y(\cdot,0), \partial_t y(\cdot,0)\right) = (u_0, u_1)$ in $\Omega$.
\end{definition}

The well-posedness of system \eqref{system_linear} is proved in the following theorem

\begin{theoreme}\label{Section2_Th1}
Let $A\in L^\infty(Q_T)$, $B\in L^1\left(0,T; L^2(\Omega)\right)$, $v\in L^2(0,T)$ and $(u_0,u_1)\in H^1_{(0)}(\Omega) \times L^2(\Omega)$. There exists a unique weak solution $y$ of \eqref{system_linear} satisfying
\begin{equation*}
y\in\mathcal{C}^0\big([0,T]; H^1_{(0)}(\Omega)\big)\cap \mathcal{C}^1\big([0,T]; L^2(\Omega)\big).
\end{equation*}
Moreover, there exists a constant $C=C(\Omega, T)>0$ such that, for any $t\in [0,T]$,
\begin{equation}\label{A_priori_estimate}
\begin{aligned}
\|y(\cdot, t)\|_{H^1_{(0)}(\Omega)} &+ \|\partial_t y(\cdot, t)\|_{L^2(\Omega)}   \leq Ce^{C\sqrt{\|A\|_{L^\infty(Q_T)}}}\Big( \left\|\left( u_0, u_1\right)\right\|_{H^1_{(0)}(\Omega) \times L^2(\Omega)} + \|B\|_{L^1\left(0,t; L^2(\Omega)\right)} + \|v\|_{L^2(0,t)} \Big).
\end{aligned}
\end{equation}
Finally, $y(1, \cdot)$ belongs to $H^1(0,T)$ and there exists $C=C(\Omega, T)>0$ such that, for any $t\in [0,T]$,
\begin{equation}\label{eq_partial_t_y}
\|\partial_t y(1,\cdot)\|_{L^2(0,t)} \leq Ce^{C\sqrt{\|A\|_{L^\infty(Q_T)}}} \left( \left\|\left( u_0, u_1\right)\right\|_{H^1_{(0)}(\Omega) \times L^2(\Omega)} + \|B\|_{L^1\left(0,t; L^2(\Omega)\right)} + \|v\|_{L^2(0,t)}\right).
\end{equation}
\end{theoreme}
\begin{proof}
The proof is based on well-known arguments. For the existence, as in \cite[Proof of Theorem $8.1$ p. $265$]{lions2012non}, a candidate solution is built using the Faedo-Galerkin method by considering a Hilbert basis of $L^2(\Omega)$ composed of eigenvalues of the Laplacian which is also an orthogonal basis of $H^1_{(0)}(\Omega)$ (see \cite[Théorème IX.$31$ and Remarque $29$ p.$192$-$193$]{brezis1983analyse} for the existence of such a basis). First, we prove that this solution satisfies inequalities \eqref{A_priori_estimate} and \eqref{eq_partial_t_y} and then we show that the above candidate solution is a solution of our problem. Uniqueness is treated separately in a similar way to \cite[Theorem $4$ p.385]{evans2022partial}.
\end{proof}

\subsection{Existence and uniqueness of solutions in the space $L^2(\Omega)\times H^{-1}_{(0)}(\Omega)$}\label{Section_1.2}

This section is dedicated to the existence and uniqueness of solution of the linear wave equation in the space $L^2(\Omega)\times H^{-1}_{(0)}(\Omega)$. In particular, we define the solution of \eqref{system_adjoint} by the transposition method (see \cite[Chapter I, Section $4.2$ p.$47$]{lions2012non}). For any $g\in L^2(Q_T)$, let $w\in \mathcal{C}^0\big( [0,T]; H^1_{(0)}(\Omega)\big)\cap\mathcal{C}^1\big([0,T]; L^2(\Omega)\big)$ be the weak solution of the backward adjoint equation
\begin{equation}\label{Eq_en_w(transpo)}
\left\lbrace 
    	\begin{aligned}
    	&\partial_{tt} w - \partial_{xx} w + Aw = g, &  Q_T, \\ &w(0,\cdot) = 0, \ \ \partial_x w(1,\cdot) = 0,   & (0,T) , \\ & \big( w(\cdot,T),\partial_t w(\cdot, T)\big)  = (0, 0), & \Omega.
    	\end{aligned}
    	\right.
\end{equation}
\begin{definition}\label{def_sol_transpo}
We say that $\varphi\in L^2\big(0,T; L^2(\Omega)\big)$ is a solution in the sense of transposition of \eqref{system_adjoint} if and only if $\varphi$ is solution of the identity
\begin{equation}\label{def_weak_solution}
\begin{aligned}
\int_{Q_T} \varphi(x,t) g(x,t) \dx\dt = \int_{Q_T} B(x,t) w(x,t) \dx\dt &+ \langle \varphi_1, w(\cdot, 0) \rangle_{H^{-1}_{(0)}(\Omega), H^1_{(0)}(\Omega)} \\&  - \int_\Omega \varphi_0(x)\partial_tw(x,0)\dx, \hspace{1cm} \forall g \in L^2(Q_T),
\end{aligned}
\end{equation}
where $w$ is the weak solution of \eqref{Eq_en_w(transpo)}.
\end{definition} 

\begin{theoreme}\label{Theorem_reg_donnee_non_reg}
Let $B=0$, $A\in L^\infty(Q_T)$ and $(\varphi_0, \varphi_1) \in L^2(\Omega) \times H^{-1}_{(0)}(\Omega)$. Then, there exists a unique solution in the sense of transposition of \eqref{system_adjoint} satisfying
\begin{equation*}
\varphi \in \mathcal{C}^0\big([0,T]; L^2(\Omega)\big)\cap \mathcal{C}^1\big([0,T]; H^{-1}_{(0)}(\Omega)\big).
\end{equation*}
Moreover, there exists a constant $C = C(T, \Omega) >0$ such that
\begin{equation}\label{a_priori_estimate_donnee_non_reguliere}
\|\varphi(\cdot, t)\|_{ L^2(\Omega)} + \|\partial_t\varphi(\cdot, t)\|_{H^{-1}_{(0)}(\Omega)} \leq Ce^{C\sqrt{\|A\|_{L^\infty(Q_T)}}}\left\|\left(\varphi_0, \varphi_1\right)\right\|_{L^2(\Omega)\times H^{-1}_{(0)}(\Omega)},
\end{equation}
for any $t\in [0,T]$.
\end{theoreme}
\begin{proof}
Let $(\varphi_0, \varphi_1)\in L^2(\Omega)\times H^{-1}_{(0)}(\Omega)$ and $A\in L^\infty(Q_T)$. We easily check that problem 
\begin{equation}\label{FV_Lax_Migram}
b(\varphi, g) = \ell(g), \hspace{1cm} \forall  g\in L^2(Q_T),
\end{equation}
where
\begin{equation*}
\begin{aligned}
b(\varphi, g) = \int_{Q_T}\varphi g \dx \dt \ \ \ \ \ \text{ and } \ \ \ \ \
 \ell (g) = \langle \varphi_1, w_g(\cdot, 0) \rangle_{H^{-1}_{(0)}(\Omega), H^1_{(0)}(\Omega)} - \int_\Omega \varphi_0(x) \partial_t w_g(x,0)\dx,
\end{aligned}
\end{equation*}
admits a unique solution $\varphi\in L^2(Q_T)$ by the Lax-Milgram theorem (see \cite[Corollaire V.$8$ p.84]{brezis1983analyse}).
\\ Now, let us write
\begin{equation*}
\varphi = \Psi_1 + \Psi_2
\end{equation*}
where $\Psi_1\in \mathcal{C}^0\big([0,T]; L^2(\Omega)\big)\cap\mathcal{C}^1\big((0,T]; H^{-1}_{(0)}(\Omega)\big)$ and $\Psi_2\in\mathcal{C}^0\big([0,T]; H^1_{(0)}(\Omega)\big)\cap \mathcal{C}^1\big([0,T]; L^2(\Omega)\big)$ are respectively the solution in the sense of transposition and the weak solution of
\begin{equation*}\label{Psi_1}
\left\lbrace 
    	\begin{aligned}
    	&\partial_{tt} \Psi_1 - \partial_{xx} \Psi_1 = 0, &  Q_T, \\ &\Psi_1(0,\cdot) = 0, \ \ \partial_x \Psi_1(1,\cdot) = 0,   & (0,T) , \\ & \big( \Psi_1(\cdot,T),\partial_t \Psi_1(\cdot, T)\big)  = (\varphi_0,\varphi_1), & \Omega,
    	\end{aligned}
    	\right. \hspace{1cm}
    	\left\lbrace 
    	\begin{aligned}
    	&\partial_{tt} \Psi_2 - \partial_{xx} \Psi_2 + A\Psi_2 = -A\Psi_1, &  Q_T, \\ &\Psi_2(0,\cdot) = 0, \ \ \partial_x \Psi_2(1,\cdot) = 0,   & (0,T) , \\ & \big( \Psi_2(\cdot,T),\partial_t \Psi_2(\cdot, T)\big)  = (0,0), & \Omega.
    	\end{aligned}
    	\right.
\end{equation*}
Remark that, $\varphi \in \mathcal{C}^0\big([0,T]; L^2(\Omega)\big)\cap\mathcal{C}^1\big((0,T]; H^{-1}_{(0)}(\Omega)\big)$.
Since the energy associated with system in $\Psi_1$ is conserved, we have
\begin{equation}\label{Eq1Psi_1}
\left\|\left(\Psi_1(\cdot, t), \partial_t\Psi_1(\cdot, t)\right)\right\|_{L^2(\Omega)\times H^{-1}_{(0)}(\Omega)}^2 = \left\|\left(\varphi_0, \varphi_1\right)\right\|_{L^2(\Omega)\times H^{-1}_{(0)}(\Omega)}^2, \hspace{1cm} \forall t \in [0,T].
\end{equation}
Moreover, using estimates \eqref{A_priori_estimate} and \eqref{Eq1Psi_1}, we obtain
\begin{equation}\label{Eq1Psi_2}
\begin{aligned}
\left\|\left(\Psi_2(\cdot, t), \partial_t\Psi_2(\cdot, t)\right)\right\|_{H^1_{(0)}(\Omega)\times L^2(\Omega)}^2 \leq &\ Ce^{C\sqrt{\|A\|_{L^\infty(Q_T)}}}\left\|A\Psi_1\right\|_{L^2(Q_T)}^2
\\\leq &\ Ce^{C\sqrt{\|A\|_{L^\infty(Q_T)}}}T\left\|A\right\|_{L^\infty(Q_T)}^2\left\|\left(\varphi_0, \varphi_1\right)\right\|_{L^2(\Omega)\times H^{-1}_{(0)}(\Omega)}^2.
\end{aligned}
\end{equation}
Therefore, since the embeddings $H^1_{(0)}(\Omega)\hookrightarrow L^2(\Omega)$ and $L^2(\Omega)\hookrightarrow H^{-1}_{(0)}(\Omega)$ are continuous, we deduce estimate \eqref{a_priori_estimate_donnee_non_reguliere} using \eqref{Eq1Psi_1} and \eqref{Eq1Psi_2}.
\end{proof}

\begin{remark}
Let $\varphi \in \mathcal{C}^0\big([0,T]; L^2(\Omega)\big)\cap \mathcal{C}^1\big([0,T]; H^{-1}_{(0)}(\Omega)\big)$ the solution in the sense of transposition of \eqref{system_adjoint} associated with $B=0$. As a consequence of the time reversibility of the wave equation, $\varphi$ satisfies 
\begin{equation}\label{Decroissance_exp_energie_adj_bis_bis}
\begin{aligned}
\left\| \left(\varphi_0,\varphi_1\right)\right\|_{L^2(\Omega)\times H^{-1}_{(0)}(\Omega)}  \leq Ce^{C \sqrt{\|A\|_{L^\infty(Q_T)}}}\Big( \|\varphi(\cdot, t)\|_{L^2(\Omega)} + \|\partial_t \varphi(\cdot, t)\|_{H^{-1}_{(0)}(\Omega)} \Big),
\end{aligned}
\end{equation}
for all $t\in [0,T]$.
\end{remark}

\section{Observability estimate}\label{Section_1.3}

This section is devoted to the proof of Theorem \ref{Main_result_Obs_inegality}. It is an adaptation of \cite[Section $3$ p.$120$]{zuazua1993exact} to the boundary case.
Here and in the following, the Dirichlet-Neumann Laplacian inverse operator, noted $\left(-\partial_{xx}\right)^{-1}$, is defined by
\begin{equation*}
\begin{aligned}
\left(-\partial_{xx}\right)^{-1} : H^{-1}_{(0)}(\Omega) & \rightarrow  H^1_{(0)}(\Omega)
\\f &\mapsto u 
\end{aligned}
\end{equation*} 
where $u$ is the unique solution of 
\begin{equation*}
\int_{\Omega} \partial_x u \ \partial_x v \dx = \langle f, v \rangle_{H^{-1}_{(0)}(\Omega), H^1_{(0)}(\Omega)}, \hspace{1cm} \forall v\in H^1_{(0)}(\Omega).
\end{equation*}
In particular, $\left(-\partial_{xx}\right)^{-1}$ is a continuous isomorphism (see \cite[p.$201$]{Lions1988ControlabiliteEP}) such that
\begin{equation*}
\left\|f\right\|_{H^{-1}_{(0)}(\Omega)} = \left\|\left(-\partial_{xx}\right)^{-1}f\right\|_{H^1_{(0)}(\Omega)}, \hspace{1cm} \forall f\in H^{-1}_{(0)}(\Omega).
\end{equation*}

\begin{proposition}{\textbf{Hidden regularity}.}\label{Hidden_regularity}
Let $\varphi\in \mathcal{C}^0\big([0,T]; L^2(\Omega)\big)\cap \mathcal{C}^1\big([0,T]; H^{-1}_{(0)}(\Omega)\big)$ be the solution in the sense of transposition of \eqref{system_adjoint} associated with $A\in L^\infty(Q_T)$, $B=0$ and $(\varphi_0, \varphi_1)\in L^2(\Omega)\times H^{-1}_{(0)}(\Omega)$. Then, $\varphi(1,\cdot)$ belongs to $L^2(0,T)$ and there exists a constant $C=C(T, \Omega)>0$ such that
\begin{equation}\label{Estimation_Hidden_regularity}
\|\varphi(1,\cdot)\|_{L^2(0,T)} \leq Ce^{C\sqrt{\|A\|_{L^\infty(Q_T)}}}\left\|\left(\varphi_0, \varphi_1\right)\right\|_{L^2(\Omega)\times H^{-1}_{(0)}(\Omega)}.
\end{equation}
\end{proposition}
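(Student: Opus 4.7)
The plan is to prove the estimate by a duality argument that reduces the required trace regularity of $\varphi$ to the a priori estimates of Theorem \ref{Section2_Th1}. Given any $h\in L^2(0,T)$, I would introduce an auxiliary backward state $z$ solution of the mixed problem with equation $\partial_{tt}z - \partial_{xx}z + Az = 0$ in $Q_T$, boundary conditions $z(0,\cdot) = 0$, $\partial_x z(1,\cdot) = h$, and zero final data $(z(\cdot,T), \partial_t z(\cdot,T)) = (0,0)$. After the time reversal $t\mapsto T-t$, Theorem \ref{Section2_Th1} applies and gives $z \in \mathcal{C}^0([0,T]; H^1_{(0)}(\Omega)) \cap \mathcal{C}^1([0,T]; L^2(\Omega))$ together with the estimate
\begin{equation*}
\|(z(\cdot,0), \partial_t z(\cdot,0))\|_{H^1_{(0)}(\Omega)\times L^2(\Omega)} \leq Ce^{C\sqrt{\|A\|_{L^\infty(Q_T)}}}\|h\|_{L^2(0,T)}.
\end{equation*}

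The second step is to establish, for regular data $(\varphi_0, \varphi_1) \in H^1_{(0)}(\Omega) \times L^2(\Omega)$, the integration-by-parts identity
\begin{equation*}
\int_0^T \varphi(1,t)\, h(t) \dt = \langle \varphi_1, z(\cdot,0)\rangle_{H^{-1}_{(0)}(\Omega), H^1_{(0)}(\Omega)} - \int_\Omega \varphi_0\, \partial_t z(\cdot,0) \dx.
\end{equation*}
This is obtained by multiplying the equation for $\varphi$ by $z$, integrating over $Q_T$, and performing successive integrations by parts in both space and time. All boundary contributions cancel thanks to $\varphi(0,\cdot) = z(0,\cdot) = 0$ and $\partial_x \varphi(1,\cdot) = 0$, except for the term involving $\partial_x z(1,\cdot) = h$, which produces the left-hand side; the volume term $\int_{Q_T} \varphi(\partial_{tt}z - \partial_{xx}z + Az) \dx\dt$ vanishes because $z$ satisfies the adjoint equation. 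For such regular data, Theorem \ref{Section2_Th1} ensures that $\varphi$ is a weak solution and $\varphi(1,\cdot) \in H^1(0,T)$, so the identity makes sense.

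Combining the identity with Cauchy-Schwarz and the estimate on $z$ yields
\begin{equation*}
\left|\int_0^T \varphi(1,t)\, h(t) \dt \right| \leq Ce^{C\sqrt{\|A\|_{L^\infty(Q_T)}}} \|(\varphi_0, \varphi_1)\|_{L^2(\Omega) \times H^{-1}_{(0)}(\Omega)} \|h\|_{L^2(0,T)}.
\end{equation*}
Taking the supremum over unit-norm $h$ gives \eqref{Estimation_Hidden_regularity} in the regular case. To conclude for general $(\varphi_0, \varphi_1) \in L^2(\Omega) \times H^{-1}_{(0)}(\Omega)$, I approximate by a sequence $(\varphi_0^n, \varphi_1^n) \in H^1_{(0)}(\Omega) \times L^2(\Omega)$ converging in $L^2(\Omega) \times H^{-1}_{(0)}(\Omega)$. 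Applied to the difference $\varphi^n - \varphi^m$, the estimate shows that $\{\varphi^n(1,\cdot)\}_n$ is Cauchy in $L^2(0,T)$, so it converges to a limit $\Phi \in L^2(0,T)$. The continuity of the transposition solution map \eqref{a_priori_estimate_donnee_non_reguliere} together with the identity passed to the limit legitimates defining $\varphi(1,\cdot) := \Phi$, and the inequality persists.

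The principal obstacle I foresee is the rigorous justification of the integration-by-parts identity, since $\varphi$ is only a weak solution and lacks classical regularity. This should be handled by a standard time-regularization (convolving $\varphi$ and $z$ against a mollifier in the time variable) or by testing the variational formulation in Definition \ref{Def_weak_sol} against $z$ and using the weak time derivative structure $\varphi \in H^2(0,T; H^{-1}_{(0)}(\Omega))$. Once this identity is justified, the remainder of the argument proceeds by linearity and density and presents no further difficulty.
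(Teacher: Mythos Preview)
Your argument is correct and follows a genuinely different route from the paper. The paper does not use duality: for regular data $(\varphi_0,\varphi_1)\in H^1_{(0)}(\Omega)\times L^2(\Omega)$ it introduces the time-primitive
\[
h(x,t)=\int_0^t\varphi(x,s)\,\mathrm ds+\chi(x),\qquad -\partial_{xx}\chi=-\varphi_1,
\]
checks that $h$ is a weak solution of a wave equation with right-hand side $-\int_0^t A\varphi$ and initial data $(\chi,\varphi_0)\in H^1_{(0)}(\Omega)\times L^2(\Omega)$, and then applies the boundary trace estimate \eqref{eq_partial_t_y} of Theorem~\ref{Section2_Th1} to bound $\|\partial_t h(1,\cdot)\|_{L^2(0,T)}=\|\varphi(1,\cdot)\|_{L^2(0,T)}$. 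Density finishes.

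Your duality approach is arguably cleaner: it only uses the interior energy estimate \eqref{A_priori_estimate} for the auxiliary state $z$, whereas the paper needs the additional trace bound \eqref{eq_partial_t_y}. Your identity is moreover exactly the statement that the boundary observation $\varphi\mapsto\varphi(1,\cdot)$ is the adjoint of the Neumann control-to-initial-state map $h\mapsto\bigl(-\partial_t z(\cdot,0),\,z(\cdot,0)\bigr)$, which dovetails with the controllability application of Section~\ref{Section3}. The paper's lifting trick, on the other hand, stays entirely on the ``direct'' side and avoids introducing a second PDE, at the price of having to track the inhomogeneous source $-\int_0^t A\varphi$ through the estimate. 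Both routes lead to the same constant $Ce^{C\sqrt{\|A\|_{L^\infty}}}$, and both close by the same density step. Your anticipated difficulty with the integration-by-parts identity is real but routine; since you work first with $(\varphi_0,\varphi_1)\in H^1_{(0)}(\Omega)\times L^2(\Omega)$, both $\varphi$ and $z$ are weak solutions in the sense of Definition~\ref{Def_weak_sol}, and the identity follows by testing one variational formulation against the other and integrating in time.
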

\begin{proof}
Let $(\varphi_0, \varphi_1)\in H^1_{(0)}(\Omega)\times L^2(\Omega)$ and let $\varphi\in \mathcal{C}^0\big([0,T]; H^1_{(0)}(\Omega)\big)\cap \mathcal{C}^1\big([0,T]; L^2(\Omega)\big)\cap \mathcal{C}^2\big([0,T]; H^{-1}_{(0)}(\Omega)\big)$ be the weak solution of \eqref{system_adjoint} associated with $(\varphi_0, \varphi_1)$. We consider $\chi\in H^1_{(0)}(\Omega)$ the unique weak solution of
\begin{equation}
\left\lbrace 
    	\begin{aligned}
    	&-\partial_{xx} \chi = - \varphi_1, &  \Omega, \\ &\chi(0) = 0, \ \partial_x \chi(1) = 0,
    	\end{aligned}
    	\right.
\end{equation}
that is $\chi$ is solution of
\begin{equation*}
\int_\Omega \partial_x\chi\partial_xv \dx = -\langle \varphi_1, v\rangle_{H^{-1}_{(0)}(\Omega), H^1_{(0)}(\Omega)}, \hspace{1cm} \forall v\in H^1_{(0)}(\Omega).
\end{equation*}
We define
\begin{equation*}
 h(x,t) = \int_0^t \varphi(x,s)\ds + \chi(x), \hspace{1cm} \forall (x,t)\in Q_T.
\end{equation*}
In particular, $h \in \mathcal{C}^1\big([0,T]; H^1_{(0)}(\Omega)\big)\cap \mathcal{C}^2\big([0,T]; L^2(\Omega)\big)\cap \mathcal{C}^3\big([0,T]; H^{-1}_{(0)}(\Omega)\big)$.
Since $\varphi$ is a weak solution of system \eqref{system_adjoint}, by Definition \ref{Def_weak_sol}, we have
\begin{equation*}
\langle \partial_{tt}\varphi(\cdot, t), z \rangle_{H^{-1}_{(0)}(\Omega), H^1_{(0)}(\Omega)} + \int_\Omega \partial_x \varphi(x,t) z(x) \dx + \int_\Omega A(x,t)\varphi(x,t) z(x) \dx = 0, \hspace{0.5cm} \forall z\in H^1_{(0)}(\Omega)
\end{equation*}
for any $t\in [0,T]$. We integrate the previous equation on $[0,t], t\in (0,T]$ and we obtain
\begin{equation*}
\begin{aligned}
&\int_0^t \langle \partial_{tt}\varphi(\cdot, s), z \rangle_{H^{-1}_{(0)}(\Omega), H^1_{(0)}(\Omega)} \ds + \int_\Omega\int_0^t \partial_x \varphi(x,s) z(x) \ds\dx + \int_0^t\int_\Omega A(x,s)\varphi(x,s) z(x) \ds\dx = 0
\\ \Leftrightarrow \hspace{0.5cm} & \int_0^t \partial_t\left(\langle \partial_{t}\varphi(\cdot, s), z \rangle_{H^{-1}_{(0)}(\Omega), H^1_{(0)}(\Omega)} \right)\ds + \int_\Omega\int_0^t \partial_x \varphi(x,s) \partial_x z(x) \ds\dx = \int_\Omega \left( -\int_0^t A(x,s)\varphi(x,s)\ds\right) z(x) \dx 
\\ \Leftrightarrow \hspace{0.5cm} & \langle \partial_{t}\varphi(\cdot, t), z \rangle_{H^{-1}_{(0)}(\Omega), H^1_{(0)}(\Omega)} -\langle \partial_{t}\varphi(\cdot, 0), z \rangle_{H^{-1}_{(0)}(\Omega), H^1_{(0)}(\Omega)} + \int_\Omega\int_0^t \partial_x \varphi(x,s) \partial_x z(x) \ds\dx \\& = \int_\Omega \left( -\int_0^t A(x,s)\varphi(x,s)\ds\right) z(x) \dx 
\\ \Leftrightarrow \hspace{0.5cm} & \langle \partial_{t}\varphi(\cdot, t), z \rangle_{H^{-1}_{(0)}(\Omega), H^1_{(0)}(\Omega)} + \int_\Omega \partial_x \chi(x) \partial_x z(x)\dx + \int_\Omega\left(\int_0^t \partial_x \varphi(x,s)\ds\right) \partial_x z(x) \dx \\& = \int_\Omega \left( -\int_0^t A(x,s)\varphi(x,s)\ds\right) z(x) \dx
\\ \Leftrightarrow \hspace{0.5cm} & \langle \partial_{t}\varphi(\cdot, t), z \rangle_{H^{-1}_{(0)}(\Omega), H^1_{(0)}(\Omega)} + \int_\Omega\left(\int_0^t \partial_x \varphi(x,s)\ds + \partial_x \chi(x)\right) \partial_x z(x) \dx = \int_\Omega \left( -\int_0^t A(x,s)\varphi(x,s)\ds\right) z(x) \dx.
\end{aligned}
\end{equation*}
Since $\partial_{tt}h(\cdot,t) = \partial_t\varphi(\cdot,t) \ \ \text{ in } H^{-1}_{(0)}(\Omega)$ and $\partial_xh(\cdot, t) = \int_0^t \partial_x\varphi(\cdot, s)\ds + \partial_x\chi \ \ \text{ in } L^2(\Omega)$, we deduce
\begin{equation*}
\langle \partial_{tt}h(\cdot,t), z \rangle_{H^{-1}_{(0)}(\Omega), H^1_{(0)}(\Omega)} + \int_\Omega\partial_xh(\cdot, t) \partial_x z(x) \dx = \int_\Omega \left( -\int_0^t A(x,s)\varphi(x,s)\ds\right) z(x) \dx
\end{equation*}
i.e. $h$ is a weak solution of
\begin{equation}\label{syst_omega_hidden_regularity}
\left\lbrace 
    	\begin{aligned}
    	&\partial_{tt}h - \partial_{xx}h = - \int_0^t A\varphi\d\sigma, &  Q_T, \\ &h(0,\cdot) = 0, \ \partial_xh(1,\cdot) = 0,  &(0,T)\\
    	& \left( h(\cdot,0), \partial_th(\cdot,0)\right) = (\chi, \varphi_0), & \Omega,
    	\end{aligned}
    	\right.
\end{equation}
with $(\chi, \varphi_0)\in H^1_{(0)}(\Omega)\times H^1_{(0)}(\Omega) \subset H^1_{(0)}(\Omega)\times L^2(\Omega)$.
Thus, using \eqref{eq_partial_t_y}, we get
\begin{equation*}
\|\partial_th(1,\cdot)\|_{L^2(0,T)} \leq C\left( \|\chi\|_{H^1_{(0)}(\Omega)} + \|\varphi_0\|_{L^2(\Omega)} + \left\|\int_0^t A(\cdot, \sigma)\varphi(\cdot, \sigma)\d\sigma \right\|_{L^2(Q_T)}\right).
\end{equation*}
We have (using Cauchy-Schwarz inequality and estimate  \eqref{a_priori_estimate_donnee_non_reguliere})
\begin{equation*}
\begin{aligned}
\left\|\int_0^t A(\cdot, \sigma)\varphi(\cdot, \sigma)\d\sigma \right\|_{L^2(Q_T)}^2 \leq & \ T\|A\|_{L^\infty(Q_T)}^2 \int_0^1 \left| \int_0^t \varphi(x,\sigma) \d\sigma\right|^2 \dx
\\ \leq &\  T\|A\|_{L^\infty(Q_T)}^2 \int_0^1 \left| \sqrt{t}\left(\int_0^t |\varphi(x,\sigma)|^2\d\sigma\right)^{1/2} \right|^2 \dx
\\ \leq &\ T\|A\|_{L^\infty(Q_T)}^2 \int_0^1 T \|\varphi(x,\cdot)\|_{L^2(0,T)}^2\dx
 \leq  T^2\|A\|_{L^\infty(Q_T)}^2\|\varphi\|_{L^2(Q_T)}^2
 \\ \leq &\ T^2C\|A\|_{L^\infty(Q_T)}^2e^{C\sqrt{\|A\|_{L^\infty(Q_T)}}}\left\|\left(\varphi_0, \varphi_1\right)\right\|_{L^2(\Omega)\times H^{-1}_{(0)}(\Omega)}^2 
\end{aligned}
\end{equation*}
and since $\left( - \partial_{xx}\right)^{-1}$ is a continuous isomorphism, $\|\chi\|_{H^1_{(0)}(\Omega)} \leq C \|\varphi_1\|_{H^{-1}_{(0)}(\Omega)}$.
Therefore 
\begin{equation*}\label{Eq1_hidden_regularity}
\|\partial_th(1,\cdot)\|_{L^2(0,T)} \leq Ce^{C\sqrt{\|A\|_{L^\infty(Q_T)}}}\left\|\left(\varphi_0, \varphi_1\right)\right\|_{L^2(\Omega)\times H^{-1}_{(0)}(\Omega)},
\end{equation*}
for all $(\varphi_0, \varphi_1)\in H^1_{(0)}(\Omega)\times L^2(\Omega)$. By a density argument, since $\partial_t h = \varphi$ in $Q_T$, we deduce the result.
\end{proof}

\begin{lemme}\label{prop2}
Let $T>2$ and let $\varphi\in \mathcal{C}^0\big([0,T]; L^2(\Omega)\big)\cap \mathcal{C}^1\big([0,T]; H^{-1}_{(0)}(\Omega)\big)$ be the solution in the sense of transposition of \eqref{system_adjoint} associated with $A\in L^\infty(Q_T)$, $B=0$  and $(\varphi_0, \varphi_1)\in L^2(\Omega)\times H^{-1}_{(0)}(\Omega)$. There exists a constant $C=C(\Omega, T)>0$ such that
\begin{equation}\label{In_prop2}
\int_1^{T-1} \left\|\varphi(\cdot, t)\right\|^2_{L^2(\Omega)} \dt \leq Ce^{C\sqrt{\|A\|_{L^\infty(Q_T)}}} \left\|\varphi(1, \cdot)\right\|^2_{L^2(0,T)}.
\end{equation}
\end{lemme}
\begin{proof}
Let $\psi = \psi(t,x) \in \mathcal{C}^0\big([0,1]; L^2(0,T)\big)\cap \mathcal{C}^1\big([0,1]; H^{-1}_{(0)}(0,T)\big)$ be the solution in the sense of transposition of the following wave equation where the role of the time and space variables has been interchanged:
\begin{equation*}
\left\lbrace 
    	\begin{aligned}
    	&\partial_{xx} \psi - \partial_{tt} \psi - A\psi = 0, &  Q_T, \\ &\psi(0,\cdot) = 0, \ \ \partial_t\psi(T,\cdot) = 0,   & \Omega , \\ & \big( \psi(\cdot,1),\partial_x \psi(\cdot, 1)\big)  = (\varphi(1,\cdot), \partial_x\varphi(1,\cdot)), & (0,T).
    	\end{aligned}
    	\right.
\end{equation*}
The estimate \eqref{a_priori_estimate_donnee_non_reguliere} applied to $\psi$ becomes
\begin{equation*}
\|\psi(\cdot, x)\|_{L^2(0,T)}^2 + \|\partial_x\psi(\cdot, x)\|_{H^{-1}_{(0)}(0,T)}^2 \leq Ce^{C\sqrt{\|A\|_{L^\infty(Q_T)}}} \left\|\big(\psi(\cdot, 1), \partial_x\psi(\cdot, 1)\big)\right\|_{L^2(0,T)\times H^{-1}_{(0)}(0,T)}^2 , \hspace{1cm} \forall x\in \Omega.
\end{equation*}
Since $\psi(t, 1) = \varphi(1,t)$ and $\partial_x\psi(t, 1) = \partial_x\varphi(1,t) = 0$ in $[0,T]$, we obtain
\begin{equation*}
\|\psi(\cdot, x)\|_{L^2(0,T)}^2 + \|\partial_x\psi(\cdot, x)\|_{H^{-1}_{(0)}(0,T)}^2 \leq Ce^{C\sqrt{\|A\|_{L^\infty(Q_T)}}} \left\|\varphi(1, \cdot)\right\|_{L^2(0,T)}^2, \hspace{1cm} \forall x\in \Omega,
\end{equation*}
and we deduce
\begin{equation}\label{m1}
\|\psi\|_{L^2(Q_T)}^2 \leq  C|\Omega|e^{C\sqrt{\|A\|_{L^\infty(Q_T)}}} \|\varphi(1, \cdot)\|_{L^2(0,T)}^2.
\end{equation}
Moreover, since $\varphi = \psi$ in $\tau(1) := \{ (x,t)\in Q_T; \ t\in (1-x, x+T-1)\}\subset Q_T$, we have
\begin{equation}\label{m2}
\begin{aligned}
\|\psi\|_{L^2(Q_T)}^2\geq \int_{\tau(1)}|\psi(t,x)|^2 \dx\dt & = \int_{\tau(1)} |\varphi(x,t)|^2 \dx\dt \\& = \int_0^1 \int_{1-x}^{x+T-1} |\varphi(x,t)|^2 \dx\dt  \geq  \int_0^1 \int_1^{T-1} |\varphi(x,t)|^2 \dx\dt.
\end{aligned}
\end{equation}
Therefore, using \eqref{m1} and \eqref{m2}, we deduce the result.
\end{proof}

\begin{lemme}\label{prop3}
Let $T>2$. There exists $C = C(\Omega, T)>0$ such that the solution $\varphi\in \mathcal{C}^0\big([0,T]; L^2(\Omega)\big)\cap\mathcal{C}^1\big([0,T]; H^{-1}_{(0)}(\Omega)\big)$ in the sense of transposition of \eqref{system_adjoint} associated with $A\in L^\infty(Q_T)$, $B=0$ and $(\varphi_0, \varphi_1)\in L^2(\Omega)\times H^{-1}_{(0)}(\Omega)$ satisfies
\begin{equation}\label{In_prop3}
\int_{t_1}^{t_2}\left\|\partial_t \varphi(\cdot, t)\right\|^2_{H^{-1}_{(0)}(\Omega)} \dt\leq C\left(1+\|A\|_{L^\infty(Q_T)}\right) \int_1^{T-1} \left\| \varphi(\cdot,t)\right\|_{L^2(\Omega)}^2\dt, 
\end{equation}
for any $t_1, t_2\in (1,T-1), t_1<t_2$.
\end{lemme}
\begin{proof} 
Let $(\varphi_0 , \varphi_1) \in \big(H^2(\Omega)\cap H^1_{(0)}(\Omega)\big)\times H^1_{(0)}(\Omega)$ and let $\varphi \in \mathcal{C}^0\big([0,T]; H^2(\Omega)\cap H^1_{(0)}(\Omega)\big)\cap \mathcal{C}^1\big([0,T]; H^1_{(0)}(\Omega)\big)\cap \mathcal{C}^2\big([0,T]; H^{-1}_{(0)}(\Omega)\big)$ be the solution of \eqref{system_adjoint} associated with $(\varphi_0 , \varphi_1) $. Let $t_1, t_2\in (1, T-1)$ with $t_1<t_2$. We take $r\in \mathcal{C}^1_c\big( [1, T-1]\big)$ such that 
\begin{equation*}
\left\lbrace 
    	\begin{aligned}
& r(t) = 1 \text{ for all } t\in [t_1, t_2]\\
& \frac{|r'|^2}{r}\in L^\infty(1, T-1).
\end{aligned}\right.
\end{equation*}
Multiplying the equation in \eqref{system_adjoint} by $r(t)\left(-\partial_{xx}\right)^{-1}\varphi$ and integrating by part on $[1, T-1]$ the term in $\partial_{tt}\varphi$, we obtain
\begin{equation*}
\begin{aligned}
& -\int_{1}^{T-1} r'(t) \int_0^1  \partial_t\varphi(x,t) \left(-\partial_{xx}\right)^{-1}\varphi(x,t) \dx \dt - \int_{1}^{T-1} r(t) \int_0^1  \partial_t\varphi(x,t) \partial_t\left(-\partial_{xx}\right)^{-1}\varphi(x,t) \dx \dt \\& \hspace{0.5cm}
- \int_{1}^{T-1}r(t)\int_0^1  \partial_{xx}\varphi(x,t) \left(-\partial_{xx}\right)^{-1}\varphi(x,t) \dx\dt + \int_{1}^{T-1} \int_0^1 A(x,t)\varphi(x,t) \ r(t)\left(-\partial_{xx}\right)^{-1}\varphi(x,t) \dx\dt  = 0.
\end{aligned}
\end{equation*}
We can check that, for any $t\in [0,T]$
\begin{equation*}
\begin{aligned}
\int_0^1  \partial_t\varphi(x,t) \ \partial_t\left(-\partial_{xx}\right)^{-1}\varphi(x,t) \dx =  \|\partial_t\varphi(\cdot, t)\|_{H^{-1}_{(0)}(\Omega)}^2, \hspace{0.5cm} \int_0^1\partial_{xx}\varphi(x,t) \left(-\partial_{xx}\right)^{-1}\varphi(x,t) \dx = \|\varphi(\cdot, t)\|_{L^2(\Omega)}^2.
\end{aligned}
\end{equation*}
Thus, we obtain
\begin{equation*}
\begin{aligned}
\int_{1}^{T-1}r(t) \ \|\partial_t\varphi(\cdot, t)\|^2_{H^{-1}_{(0)}(\Omega)}\dt =  &\ - \int_{1}^{T-1} r'(t) \int_0^1 \partial_t\varphi(x,t) \left(-\partial_{xx}\right)^{-1} \varphi(x,t) \dx\dt - \int_{1}^{T-1} r(t) \ \|\varphi(\cdot,t)\|^2_{L^2(\Omega)}\dt \\&\ + \int_{1}^{T-1} \int_0^1 A(x,t)\varphi(x,t) \ r(t) \left( -\partial_{xx}\right)^{-1}\varphi(x,t) \dx\dt
\\ \leq &\ \int_{1}^{T-1} \langle \sqrt{r(t})\partial_t\varphi(\cdot,t),  \frac{r'(t)}{\sqrt{r(t)}} \left(-\partial_{xx}\right)^{-1} \varphi(\cdot,t)\rangle_{H^{-1}_{(0)}(\Omega), H^1_{(0)}(\Omega)} \dt \\& \ + \|r\|_{L^\infty(1, T-1)} \int_{1}^{T-1}\|\varphi(\cdot,t)\|^2_{L^2(\Omega)}\dt
\\& + C\left(1+\|r\|_{L^\infty(1,T-1)}\|A\|_{L^\infty(Q_T)}\right) \int_{1}^{T-1} \|\varphi(\cdot, t)\|_{L^2(\Omega)}^2\dt
\\ \leq & \ \frac{1}{2}\int_{1}^{T-1}r(t) \|\partial_t\varphi(\cdot, t)\|_{H^{-1}_{(0)}(\Omega)}^2 \dt \\&+ C\left(1+ \Big\|\frac{|r'(t)|^2}{r(t)}\Big\|_{L^\infty(1,T-1)} +(1+\|A\|_{L^\infty(Q_T)})\|r\|_{L^\infty(1,T-1)} \right)\\ & \hspace{5cm}\times \int_{1}^{T-1} \|\varphi(\cdot, t)\|_{L^2(\Omega)}^2\dt
\end{aligned}
\end{equation*}
i.e. we have \eqref{In_prop3} for all $\varphi$ regular, where $C$ depends on $r$. We conclude the result by a  density argument.
\end{proof}

We are now able to etablish Theorem $\ref{Main_result_Obs_inegality}$.

\begin{proof}[Proof of Theorem $\ref{Main_result_Obs_inegality}$]
Let $\varphi\in \left\{ z\in \mathcal{C}^0\big([0,T]; L^2(\Omega)\big) \cap \mathcal{C}^1\big([0,T]; H^{-1}_{(0)}(\Omega)\big); \ \partial_{tt}\varphi - \partial_{xx}\varphi + A\varphi \in L^2(Q_T) \right\}$. We decompose $\varphi$ as $\varphi = \psi_1 + \psi_2$ where $\psi_1$ and $\psi_2$ are respectively solution in the sense of transposition and weak solution of
\begin{equation*}
\left\lbrace 
    	\begin{aligned}
    	&\partial_{tt} \psi_1 - \partial_{xx} \psi_1 + A\psi_1= 0, &  Q_T, \\ &\psi_1(0,\cdot) = 0, \ \ \partial_x \psi_1(1,\cdot) = 0,   & (0,T) , \\ & \big( \psi_1(\cdot,0),\partial_t \psi_1(\cdot, 0)\big)  = (\varphi_0,\varphi_1), & \Omega,
    	\end{aligned}
    	\right. \hspace{1cm}
    	\left\lbrace 
    	\begin{aligned}
    	&\partial_{tt} \psi_2 - \partial_{xx} \psi_2 + A\psi_2 = \partial_{tt}\varphi - \partial_{xx}\varphi + A\varphi, &  Q_T, \\ &\psi_2(0,\cdot) = 0, \ \ \partial_x \psi_2(1,\cdot) = 0,   & (0,T) , \\ & \big( \psi_2(\cdot,0),\partial_t \psi_2(\cdot, 0)\big)  = (0,0), & \Omega,
    	\end{aligned}
    	\right.
\end{equation*}
Using \eqref{Decroissance_exp_energie_adj_bis_bis} then \eqref{In_prop3}, we have for any $t_1, t_2\in (1,T-1)\subset [0,T], t_1<t_2$
\begin{equation*}
\begin{aligned}
(t_2-t_1)\left\|\left(\varphi_0,\varphi_1\right)\right\|_{L^2(\Omega)\times H^{-1}_{(0)}(\Omega)} &\leq Ce^{C\sqrt{\|A\|_{L^\infty(Q_T)}}}\int_{t_1}^{t_2} \|\psi_1(\cdot, t)\|_{L^2(\Omega)} + \|\partial_t\psi_1(\cdot, t)\|_{H^{-1}_{(0)}(\Omega)} \dt
\\&  \leq Ce^{C\sqrt{\|A\|_{L^\infty(Q_T)}}} \int_{1}^{T-1} \|\psi_1(\cdot, t)\|_{L^2(\Omega)} \dt,
\end{aligned}
\end{equation*}
and using \eqref{In_prop2} and since $\psi_1 = \varphi - \psi_2$, we deduce
\begin{equation*}
\begin{aligned}
\left\|\left(\varphi_0,\varphi_1\right)\right\|_{L^2(\Omega)\times H^{-1}_{(0)}(\Omega)} &\leq \frac{1}{t_1-t_2} Ce^{C\sqrt{\|A\|_{L^\infty(Q_T)}}} \left\|\psi_1(1,\cdot)\right\|_{L^2(0,T)}
\\ &\leq \frac{1}{t_1-t_2} Ce^{C\sqrt{\|A\|_{L^\infty(Q_T)}}} \left(\left\|\varphi(1,\cdot)\right\|_{L^2(0,T)} + \left\|\psi_2(1,\cdot)\right\|_{L^2(0,T)} \right).
\end{aligned}
\end{equation*}
Moreover, since $\psi_2(1,0) = 0$ and using \eqref{eq_partial_t_y}, we have
\begin{equation*}
\begin{aligned}
\left\|\psi_2(1,\cdot)\right\|_{L^2(0,T)} &\leq C\left\|\partial_t \psi_2(1,\cdot) \right\|_{L^2(0,T)} \leq Ce^{C\sqrt{\|A\|_{L^\infty(Q_T)}}} \left\|\partial_{tt}\varphi - \partial_{xx}\varphi + A\varphi\right\|_{L^2(Q_T)},
\end{aligned}
\end{equation*}
and we deduce the result.
\end{proof}

\section{Application to the controllability of a semi-linear wave equation}\label{Section3}

This section is devoted to applications of Theorem \ref{Main_result_Obs_inegality}. In particular, as mentioned in the Introduction, one motivation for inequality \eqref{In.obs.g} is the exact boundary controllability for the semi-linear wave equation \eqref{system_semilinear}.

\subsection{Controllability of a linear wave equation}\label{Section_3.1}

In this section, for any $T>0$, the exact controllability problem associated with \eqref{system_linear} is considered : given $(u_0, u_1), (z_0, z_1)\in H^1_{(0)}(\Omega)\times L^2(\Omega)$, we look for the existence of a control function $v\in L^2(0,T)$ such that the associated solution $y\in \mathcal{C}^0\big([0,T]; H^1_{(0)}(\Omega)\big) \cap \mathcal{C}^1\big([0,T]; L^2(\Omega)\big)$ of \eqref{system_linear} satisfies $\big(y(\cdot, T), \partial_ty(\cdot, t) \big) = (z_0, z_1)$ in $\Omega$.  Moreover, the aim is to get, thanks to estimate \eqref{In.obs.g}, precise estimates of a particular state-control pair in term of the data.

Let $\Phi$ be the space defined by 

\begin{equation*}
\begin{aligned}
\Phi := \Big\{ w\in \mathcal{C}^0\big([0,T]; L^2(\Omega)\big) \cap \mathcal{C}^1\big( [0,T]; H^{-1}_{(0)}(\Omega)\big); &\  w \text{ is the solution of the transposition of } \eqref{system_adjoint}  \\ &  \hspace{4cm} \text{ for some }  B\in L^2(Q_T)\Big\}.
\end{aligned}
\end{equation*}
From \eqref{In.obs.g}, $\Phi$ endowed with the scalar product given by
\begin{equation*}
\left(p,q \right)_{\Phi} := \int_{Q_T} \left( \partial_{tt} p -\partial_{xx}p + Ap\right) \left( \partial_{tt} q -\partial_{xx}q + Aq\right) \dx\dt + \int_0^T p(1,t)q(1,t)\dt, \hspace{1cm} \forall p,q\in \Phi,
\end{equation*}
is a Hilbert space. The main result of this section is a null-controllability result for the linear system \eqref{system_linear}.

\begin{theoreme}\label{Th.8}
Assume $T>2$. For $A\in L^\infty(Q_T)$, $B\in L^2(Q_T)$ and $(u_0, u_1) \in H^1_{(0)}(\Omega)\times L^2(\Omega)$, there exists a unique function $p\in\Phi$ solution of
\begin{equation}\label{FV_en_p}
\left(p,q\right)_\Phi = \int_{Q_T} Bq\dx\dt + \int_\Omega u_1(x) q(x, 0) \dx - \langle\partial_tq(\cdot, 0), u_0\rangle_{H^{-1}_{(0)}(\Omega),H^1_{(0)}(\Omega)}, \hspace{1cm} \forall q\in\Phi.
\end{equation} 
Moreover, $y = \partial_{tt} p -\partial_{xx}p + Ap$ is a controlled trajectory to zero for \eqref{system_linear}, $v = -p(1, \cdot)$ is the associated control function and there exists a constant $C=C(\Omega, T)>0$ such that $(y,v)$ satisfies 
\begin{equation}\label{estimation_norm_L^2_control_BIS}
\|y\|_{L^2(Q_T)} + \|v\|_{L^2(0,T)} \leq Ce^{C\sqrt{\|A\|_{L^\infty(Q_T)}}}\left( \|B\|_{L^2(Q_T)} + \|(u_0, u_1)\|_{H^1_{(0)}(\Omega)\times L^2(\Omega)}  \right).
\end{equation}
\end{theoreme}
\begin{proof}
We refer to \cite[Proposition $2.2$ p.$6$]{CF2013} and \cite[Theorem $6$ p.$7$]{bhandari2023exact} where a similar result is obtained in the case of Dirichlet boundary control.
\end{proof}

\begin{remark}
The state-control pair given by Theorem \ref{Th.8} is the unique solution of the following extremal problem
\begin{equation}\label{Minimize_J}
\left\lbrace 
    	\begin{aligned}
    	& \text{Minimize } \mathcal{J}(y,v) = \|y\|_{L^2(Q_T)}^2 + \|v\|_{L^2(0,T)}^2 
    	\\& \text{Subject to } (y,v) \in C(u_0, u_1;T)
    	\end{aligned}
    	\right.
\end{equation}
where $C(u_0, u_1;T) := \left\{ (y,v)\in L^2(Q_T)\times L^2(0,T); \ (y,v) \text{ is solution of } \eqref{system_linear} \text{ with } y(\cdot, T) = \partial_ty(\cdot, T) = 0 \text{ in } \Omega\right\}$.
\end{remark}

\subsection{Proof of Theorem \ref{First_main_result(Schauder)} by a Schauder fixed-point argument}\label{Section_3.2}

\begin{proof}[Proof of Theorem \ref{First_main_result(Schauder)}]
Let $R>0$. We define the following class
\begin{equation*}
\bar{B}_{\|\cdot\|_\infty}(0,R) := \{ z\in L^\infty(Q_T); \ \|z\|_{L^\infty(Q_T)}\leq R\}.
\end{equation*} 
It suffices to prove that the non-linear operator $K$ defined by
\begin{equation}\label{Def_fixed_point_operator_K1}
\begin{aligned}
K : \bar{B}_{\|\cdot\|_\infty}(0,R) &\rightarrow \bar{B}_{\|\cdot\|_\infty}(0,R)
\\ \xi &\mapsto y
\end{aligned}
\end{equation}
where $y$ is the controlled solution of
\begin{equation}\label{system_linear_schauder}
\left\lbrace 
    	\begin{aligned}
    	&\partial_{tt} y - \partial_{xx} y + \widehat{f}(\xi)y = -f(0), &  Q_T, \\ &y(0,\cdot) = 0, \ \ \partial_x y(1,\cdot) = v,   & (0,T) , \\ & \big( y(\cdot,0),\partial_t y(\cdot, 0)\big)  = (u_0, u_1), & \Omega,
    	\end{aligned}
    	\right. \hspace{1cm} \text{ with } \ \ \widehat{f}(r) = \left\lbrace 
    	\begin{aligned}
    	&\frac{f(r)-f(0)}{r} & \ \ \text{if } r\ne 0
    	\\ &f'(0) & \ \ \text{if } r=0
    	\end{aligned}
    	\right.
\end{equation}
given by Theorem \ref{Th.8}, has a fixed-point for some $R$ large enough. The stability of $K$ is a consequence of the observability inequality \eqref{In.obs.g}. In particular, using \eqref{Hyp_H1}, there exists $\gamma>0$ such that
\begin{equation*}
\|\widehat{f}(\xi)\|_{L^\infty(Q_T)} \leq \gamma + \beta \ln^2( 1+\|\xi\|_{L^\infty(Q_T)}),
\end{equation*}
and we deduce, using \eqref{A_priori_estimate} then \eqref{estimation_norm_L^2_control_BIS},
\begin{equation*}
\begin{aligned}
\|y\|_{L^\infty(Q_T)} & \leq Ce^{C\sqrt{\gamma + \beta \ln^2( 1+\|\xi\|_{L^\infty(Q_T)})}}\left( \|f(0)\|_{L^2(Q_T)} + \|(u_0, u_1)\|_{H^1_{(0)}(\Omega)\times L^2(\Omega)}\right) \\&\leq Ce^{C\sqrt{\gamma}}\left(1+\left\|\xi\right\|_{L^\infty(Q_T)}\right)^{C\sqrt{\beta}}\left( \|f(0)\|_{L^2(Q_T)} + \|(u_0, u_1)\|_{H^1_{(0)}(\Omega)\times L^2(\Omega)}\right).
\end{aligned}
\end{equation*}
From this estimate, we deduce that, if $\beta$ is small enough, there exists $R>0$ such that $\bar{B}_{\|\cdot\|_\infty}(0,R)$ is stable under the map $K$.  The proof for the continuity of $K$ in $\bar{B}_{\|\cdot\|_\infty}(0,R)$ and the fact that $K\left(\bar{B}_{\|\cdot\|_\infty}(0,R)\right)$ is a relatively compact subset of $\bar{B}_{\|\cdot\|_\infty}(0,R)$ is very closed to \cite[Proof of Proposition $2$ p.$92$ and Proposition $3$ p.$93$]{bhandari2023exact}. In particular, the proof is based on the compact embedding (see \cite[Corollary $8$ p.$90$]{simon1986compact})
\begin{equation*}
\mathcal{C}^0\big([0,T]; H^1_{(0)}(\Omega)\big)\cap \mathcal{C}^1\big([0,T]; L^2(\Omega)\big) \hookrightarrow L^\infty(Q_T).
\end{equation*} 
We conclude the existence of a fixed-point for $K$ by the Schauder  theorem.
\end{proof}

\begin{remark}
Following the blow up argument of \cite[Section $4$ p.$124$]{zuazua1993exact}, we can prove that the exponent $2$ in the logarithm in \eqref{Hyp_H1} is optimal.
\end{remark}
 
\begin{remark}
\textit{A priori}, the operator $K$ is not a contraction. In particular, we cannot explicitly construct a control using the Banach-Picard theorem. We refer to Section $\ref{Section_4.2}$ where divergence of the sequence 
\begin{equation}\tag{$$PF1$$}\label{PF1}
y_0 \in L^\infty(Q_T), \hspace{1cm} y_{k+1} = K(y_k), \ k\geq 0.
\end{equation}
is observed numerically.
\end{remark}

\subsection{Proof of Theorem \ref{Second_main_result} by a least-squares approach}\label{Section_3.3}

The motivation of this section is the approximation of exact controls for \eqref{system_semilinear}. Recently, a construction, based on a least-squares approach, of convergent sequence have been initially proposed in \cite{lemoine2020approximation} for the heat equation and have been then adapting for the wave equation in \cite{munch2022constructive} (and also in \cite{bottois2023constructive}) in the internal control case. In particular, this section aims to show that the observability inequality \eqref{In.obs.g} allows to extend the result \cite[Theorem $2$ p.$8$]{munch2022constructive} in our boundary control case.

For any $\alpha\in [0,1]$, we define the space
\begin{equation*}
W_\alpha := \{ f\in \mathcal{C}^1(\mathbb{R}); \  [f']_\alpha < +\infty \}, \hspace{1cm} [f']_\alpha := \sup\limits_{a,b\in\mathbb{R}, a\ne b} \frac{|f'(a)-f'(b)|}{|a-b|^\alpha}.
\end{equation*}
The functional framework is as follows:
\begin{itemize}
\item[$\bullet$] We consider the Hilbert space $\mathcal{H}$ defined by 

\begin{equation*}
\begin{aligned}
\mathcal{H} :=  \ \Bigl\{ (y,v)\in L^2(Q_T)\times L^2(0,T); &\  y\in \mathcal{C}^0\big([0,T]; H^1_{(0)}(\Omega)\big)\cap \mathcal{C}^1\big([0,T]; L^2(\Omega)\big) \text{ is the weak solution } \\& \hspace{5.3cm} \text{ of } \eqref{system_linear} \text{ for some } B\in L^2(Q_T) \Bigr\}
\end{aligned}
\end{equation*}
endowed with the scalar product
\begin{equation*}
\begin{aligned}
\Big((y,v), (\bar{y}, \bar{v}) \Big)_{\mathcal{H}} := &\ \big( \partial_{tt} y - \partial_{xx} y, \partial_{tt} \bar{y} - \partial_{xx} \bar{y} \big)_{L^2(Q_T)} + \big( y, \bar{y}\big)_{L^2(Q_T)} \\&\ + \big(v,\bar{v}\big)_{L^2(0,T)} + \Big( \big(y(\cdot, 0), \partial_ty(\cdot, 0)\big), \big(\bar{y}(\cdot, 0), \partial_t\bar{y}(\cdot, 0)\big) \Big)_{H^1_{(0)}(\Omega)\times L^2(\Omega)}
\end{aligned}
\end{equation*}
and the norm $\|\cdot\|_{\mathcal{H}} := \sqrt{(\cdot, \cdot)_{\mathcal{H}}}$.
\item[$\bullet$] We introduce $\mathcal{A}$ and $\mathcal{A}_0$ the closed subspaces of $\mathcal{H}$ defined by
\begin{equation*}
\begin{aligned}
&\mathcal{A} := \Bigl\{ (y,v)\in \mathcal{H}; \ \big( y(\cdot, 0), \partial_t y(\cdot, 0)\big) = (u_0, u_1), \ \big( y(\cdot, T), \partial_t y(\cdot, T)\big) = (z_0, z_1) \Bigr\},
\\ &\mathcal{A}_0 := \Bigl\{ (y,v)\in \mathcal{H}; \ \big( y(\cdot, 0), \partial_t y(\cdot, 0)\big) = (0, 0), \ \big( y(\cdot, T), \partial_t y(\cdot, T)\big) = (0, 0) \Bigr\}.
\end{aligned}
\end{equation*}
\end{itemize}
We assume \eqref{Hyp_H2}.
In particular, using \eqref{A_priori_estimate} and since $H^1(\Omega) \hookrightarrow \mathcal{C}^0(\overline{\Omega})$, for any $(y,v)\in\mathcal{A}$, $y\in L^\infty(Q_T)$ and thus $f(y) \in L^2(Q_T)$.
We then consider the following non convex (well-defined) extremal problem :
\begin{equation}\label{extremal_problem_E}
\boxed{
\min\limits_{(y,v)\in \mathcal{A}} E(y,v), \hspace{1cm} E(y,v) = \frac{1}{2} \|\partial_{tt}y-\partial_{xx}y + f(y)\|_{L^2(Q_T)}^2.}
\end{equation}
Remark that the infimum of $E$ is reached and is equal to $0$ since any controlled solution of \eqref{system_semilinear}, with its asssociated control, is a zero of $E$. Conversely, any zero of $E$ is a state-control pair of \eqref{system_semilinear}. We have the following property for $E$:
\begin{proposition}\label{P3}
Let $T>2$. For any $(y,v)\in\mathcal{A}$, there exists a constant $C=C(T, \Omega)>0$ such that
\begin{equation}\label{estimateP3}
\sqrt{E(y,v)} \leq  Ce^{C\sqrt{\|f'(y)\|_{L^\infty(Q_T)}}} \|E'(y,f)\|_{\mathcal{A}'_0},
\end{equation}
where $\mathcal{A}_0'$ is the topological dual\footnote{endowed with the norm $\left\|E'(y,v)\right\|_{\mathcal{A}'_0} := \sup\limits_{(Y,V)\in \mathcal{A}_0\backslash \{0\}} \frac{E'(y,v)\cdot(Y,V)}{\left\|(Y,V)\right\|_\mathcal{H}}$.} of $\mathcal{A}_0$.
\end{proposition}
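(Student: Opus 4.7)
The plan is to exploit the identity
\begin{equation*}
E'(y,v)\cdot(Y,V) = \int_{Q_T} F(y,v)\bigl(\partial_{tt}Y - \partial_{xx}Y + f'(y)Y\bigr)\dx\dt, \qquad F(y,v):=\partial_{tt}y-\partial_{xx}y+f(y),
\end{equation*}
which holds for every $(Y,V)\in\mathcal{A}_0$ (a direct Gâteaux-derivative computation, legitimate because $(y,v)\in\mathcal{A}\subset\mathcal{H}$ gives $y\in L^\infty(Q_T)$ via the embedding $\mathcal{C}^0([0,T];H^1_{(0)}(\Omega))\hookrightarrow L^\infty(Q_T)$, and hence $f(y), f'(y)\in L^\infty(Q_T)$ since $f\in\mathcal{C}^1(\mathbb{R})$). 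The idea is to select one privileged direction $(Y,V)\in\mathcal{A}_0$ making the right-hand side collapse to $\|F(y,v)\|_{L^2(Q_T)}^2 = 2E(y,v)$, and then to estimate $\|(Y,V)\|_\mathcal{H}$ sharply in terms of $\|f'(y)\|_{L^\infty(Q_T)}$.

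To construct that direction I would apply Theorem \ref{Th.8} with potential $A:=f'(y)\in L^\infty(Q_T)$, source $B:=F(y,v)\in L^2(Q_T)$ and zero initial data $(u_0,u_1)=(0,0)$. The output is a controlled pair $(Y,V)\in L^2(Q_T)\times L^2(0,T)$ where $Y$ is the weak solution of
\begin{equation*}
\partial_{tt}Y - \partial_{xx}Y + f'(y)Y = F(y,v) \text{ in } Q_T, \quad Y(0,\cdot)=0,\ \partial_xY(1,\cdot)=V,\ (Y(\cdot,0),\partial_tY(\cdot,0))=(0,0),
\end{equation*}
reaching moreover the null-final condition $(Y(\cdot,T),\partial_tY(\cdot,T))=(0,0)$, so that indeed $(Y,V)\in\mathcal{A}_0$. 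With this choice the identity above yields $E'(y,v)\cdot(Y,V) = 2E(y,v)$.

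It then remains to bound $\|(Y,V)\|_\mathcal{H}$. Since the endpoint contribution vanishes on $\mathcal{A}_0$, only $\|Y\|_{L^2(Q_T)}$, $\|V\|_{L^2(0,T)}$ and $\|\partial_{tt}Y-\partial_{xx}Y\|_{L^2(Q_T)}$ contribute; the first two are already controlled via \eqref{estimation_norm_L^2_control_BIS} by $Ce^{C\sqrt{\|f'(y)\|_{L^\infty(Q_T)}}}\|F(y,v)\|_{L^2(Q_T)}$, and the third is read off directly from the PDE as $F(y,v) - f'(y)Y$, after absorbing the polynomial factor $\|f'(y)\|_{L^\infty(Q_T)}$ into the exponential using the elementary bound $x\leq C'e^{C'\sqrt{x}}$ for $x\geq 0$. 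Combining this with $2E(y,v)=E'(y,v)\cdot(Y,V)\leq\|E'(y,v)\|_{\mathcal{A}'_0}\|(Y,V)\|_\mathcal{H}$ and dividing by $\sqrt{2E(y,v)}$ (the case $E(y,v)=0$ is trivial) produces exactly \eqref{estimateP3}.

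The crucial — and essentially the only nontrivial — point is the precise dependence on $\|f'(y)\|_{L^\infty(Q_T)}$: it is the sharp $e^{C\sqrt{\cdot}}$ behaviour of the observability constant from Theorem \ref{Main_result_Obs_inegality}, transported through the null-controllability Theorem \ref{Th.8}, that makes \eqref{estimateP3} usable later under the logarithmic-squared growth assumption \eqref{Hyp_H2} in the convergence analysis of the least-squares iterates for Theorem \ref{Second_main_result}. Everything else reduces to a standard computation coupling the functional-analytic definition of $E'$ with linear null-controllability in the right space.
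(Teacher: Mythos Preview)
Your proposal is correct and follows exactly the approach the paper has in mind: the paper's own proof is merely a reference to \cite[Proposition~1,~(iii)]{munch2022constructive}, and immediately after the proposition the paper spells out the same key identity $E'(y,v)\cdot(Y,V)=2E(y,v)$ for the controlled direction $(Y,V)\in\mathcal{A}_0$ solving \eqref{Syst(Y_k,V_k)}, which is precisely the pair you build via Theorem~\ref{Th.8}. Your estimation of $\|(Y,V)\|_\mathcal{H}$ through \eqref{estimation_norm_L^2_control_BIS} and the absorption of the linear factor $\|f'(y)\|_{L^\infty(Q_T)}$ into the exponential are the standard steps of that argument, so nothing further is needed.
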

\begin{proof}
For details, we refer to \cite[Proposition $1$, $(iii)$ p.$5$]{munch2022constructive}.
\end{proof}
We deduce that any critical point $(y,v)\in\mathcal{A}$ of $E$ is a zero of $E$. In particular, any minimizing sequence $(y_k, v_k)_{k\in\mathbb{N}}\subset \mathcal{A}$ of $E$ such that $\left\|f'(y_k)\right\|_{L^\infty(Q_T)}$ is uniformly bounded with respect to $k\in\mathbb{N}$ converges to a global minimum of $E$, and thus converges to a state-control pair for \eqref{system_semilinear}.  Remark that, as in \cite[Proposition $1$, $(ii)$ p.$5$]{munch2022constructive}, for any $(y,v)\in \mathcal{A}$ 
\begin{equation*}
E'(y,v)\cdot (Y,V) = 2E(y,v),
\end{equation*}
where $(Y,V)$ is the solution of 
\begin{equation}\label{Syst(Y_k,V_k)}
\left\lbrace 
    	\begin{aligned}
    	&\partial_{tt} Y_k - \partial_{xx} Y_k + f'(y_k)Y_k = \partial_{tt}y_k - \partial_{xx}y_k + f(y_k), &  Q_T, \\ &Y_k(0,\cdot) = 0, \ \partial_xY_k(1,\cdot) = V_k,  & (0,T) , \\ & \Big( Y_k(\cdot,0),\partial_t Y_k(\cdot, 0)\Big)  = (0, 0), & \Omega,
    	\end{aligned}
    	\right.
\end{equation}
associated with $(y,v)$. Thus, $-(Y,V)$ is a descent direction for $E$.
This leads us to consider the sequence $(y_k,v_k)_{k\in\mathbb{N}}$ in $\mathcal{A}$ defined by
\begin{equation}\tag{$$LS$$}\label{LS_algo}
\left\lbrace 
    	\begin{aligned}
    	& (y_0, v_0)\in\mathcal{A} \\ & (y_{k+1}, v_{k+1}) = (y_k, v_k)-\lambda_k (Y_k, V_k) \\ & \lambda_k = \mathrm{argmin}_{\lambda\in [0,1]} E\left((y_k, v_k) - \lambda(Y_k, V_k)\right)
    	\end{aligned}
    	\right.
\end{equation}
where $(Y_k, V_k)\in\mathcal{A}_0$ is the solution of \eqref{Syst(Y_k,V_k)} satisfying the extremal problem \eqref{Minimize_J}.

The main result of this section is a convergence of the least-squares algorithm $($LS$)$:

\begin{theoreme}\label{Th_Convergence_results}
Assume that $f\in W_\alpha$, for some $\alpha\in (0,1]$, and that $f'$ satisfies \eqref{Hyp_H2} with $\beta^\star$ small enough.
Let $(y_k,v_k)_{k\in\mathbb{N}}$ be the sequence defined by $($LS$)$. Then, 
\begin{itemize}
\item The sequence $\big(E(y_k,v_k)\big)_{k\in\mathbb{N}}$ decays to zero as $k\rightarrow \infty$.
\item The sequence $(\lambda_k)_{k\in\mathbb{N}}$ converges to $1$ as $k\rightarrow \infty$.
\item The sequence $(y_k, v_k)_{k\in\mathbb{N}}$ strongly converges to a state-control pair $(\bar{y}, \bar{v})$ of \eqref{system_semilinear}.
\end{itemize}
Moreover, the convergence of all these sequences is first at least linear and then at least with order $1+\alpha$ after a finite number of iterations.
\\ In the case $\alpha = 0$, the result is still true if we assume moreover that $\|f'\|_{L^\infty(\mathbb{R})}$ is small enough.
\end{theoreme}
\begin{proof}
The calculations differ only slightly from \cite[Section $3$]{munch2022constructive}. In the same way, the key point in the proof is the uniformly bounded character of the sequence $\left(\|y_k\|_{L^\infty(Q_T)}\right)_{k\in\mathbb{N}}$. The above property can be proved by induction using \eqref{Hyp_H2} (see \cite[Proof of Theorem $2$ p.$13$]{bottois2023constructive}) and allows us to keep a uniform bound of the sequence of observability constants $\left(Ce^{C\sqrt{\|f'(y_k)\|_{L^\infty(Q_T)}}} \right)_{k\in\mathbb{N}}$ appearing in particular in Proposition \ref{P3}.
\end{proof}

\begin{remark}
We can remove the assumption \eqref{Hyp_H2}, leading to a local controllability result:  If $\|(u_0, u_1)\|_{H^1_{(0)}(\Omega)\times L^2(\Omega)}$ is small enough then the sequence $(y_k, v_k)_{k\in\mathbb{N}}$ defined by $($LS$)$ strongly converges to a state-control pair for  \eqref{system_semilinear}.
For details, we refer to \cite[Proposition $5$ p.$15$]{bottois2023constructive}.
\end{remark}
%
%

\section{Numerical simulations}\label{Section4}

\subsection{Least-squares algorithm $($LS$)$}

In order to consider a control vanishing in time $t=0$ and $t=T$, we introduce a cut-off function $\eta\in \mathcal{C}^1_c([0,T])$ and then we consider the state-control pairs in $L^2(Q_T)\times L^2_\eta(0,T)$ where $L^2_\eta(0,T):= \left\{v; \left\|\eta^{-1}v\right\|_{L^2(0,T)} < +\infty \right\}$. The least-squares algorithm is therefore given in Algorithm \ref{Algo_LS}. 

\begin{algorithm}
\caption{Least-squares algorithm}\label{Algo_LS}
\textbf{Initialization}
\begin{itemize}
\item Compute $(y_0, v_0)\in\mathcal{A}$ solution of
\begin{equation}\label{Simu_system_(y0,v0)}
\left\lbrace 
    	\begin{aligned}
    	&\partial_{tt} y_0 - \partial_{xx} y_0 = 0, &  Q_T, \\ &y_0(0,\cdot) = 0,\ \  \partial_xy_0(1,\cdot) = v_0, & (0,T) , \\ & \big( y_0(\cdot,0),\partial_t y_0(\cdot, 0)\big)  = (u_0, u_1), & \Omega,
    	\\ & \big( y_0(\cdot,T),\partial_t y_0(\cdot, T)\big)  = (0, 0), & \Omega,
    	\end{aligned}
    	\right.
\end{equation}
where $v_0$ minimizes the functional 
\begin{equation*}
\mathcal{J}(y,v) := \left( \|y\|^2_{L^2(Q_T)} + \|v\|_{L^2_\eta(0,T)}^2\right).
\end{equation*}
\item Compute $E(y_0, v_0) = \frac{1}{2}\|\partial_{tt}y_0 - \partial_{xx} y_0 + f(y_0)\|_{L^2(Q_T)}^2$.
\end{itemize}
\While{$\sqrt{2E(y_k,v_k)}>10^{-5}$}{
\begin{itemize}
\item Compute the optimal direction $(Y_k^1, V_k^1)\in\mathcal{A}_0$
solution of 
\begin{equation}\label{Simu_system_(Y_n^1, V_n^1)}
\left\lbrace 
    	\begin{aligned}
    	&\partial_{tt} Y_k^1 - \partial_{xx} Y_k^1 + f'(y_k)Y_k^1 = \partial_{tt}y_k - \partial_{xx} y_k + f(y_k), &  Q_T, \\ &Y_k^1(0,\cdot) = 0,\ \  \partial_xY_k^1(1,\cdot) = V_k^1, & (0,T), \\ & \big( Y_k^1(\cdot,0),\partial_t Y_k^1(\cdot, 0)\big)  = (0, 0), & \Omega,
    	\\ & \big( Y_k^1(\cdot,T),\partial_t Y_k^1(\cdot, T)\big)  = (0, 0), & \Omega,
    	\end{aligned}
    	\right.
\end{equation}
where $V_k^1$ minimizes the functional $\mathcal{J}$.
\item Compute the optimal descent step\footnotemark
\begin{equation*}
\begin{aligned}
\lambda_k &= \mathrm{argmin}_{\lambda\in [0,1]} E\left(y_k - \lambda Y_k^1, v_k - \lambda V_k^1\right) \\&= \mathrm{argmin}_{\lambda\in [0,1]}\|(1-\lambda)\left(\partial_{tt}y_k - \partial_{xx}y_k + f(y_k)\right) + G_k(\lambda)\|_{L^2(Q_T)}^2
\end{aligned}
\end{equation*}
where $G_k(\lambda)=f(y_k - \lambda Y_k^1) - f(y_k) + \lambda f'(y_k)Y_k^1$.
\item Update $(y_{k+1}, v_{k+1}) = \left( y_k - \lambda_k Y_k^1, v_k - \lambda_k V_k^1 \right)$
\item Compute $E(y_{k+1}, v_{k+1}) = \frac{1}{2}\|\partial_{tt}y_{k+1} - \partial_{xx}y_{k+1} + f(y_{k+1})\|_{L^2(Q_T)}^2$.
\item Do $k = k+1$
\end{itemize}
}
\end{algorithm}
\footnotetext{by the trichotomy method}

\begin{lemme}\label{Lem_sim_num}
Let $(y_0, v_0)$ the unique controlled solution of \eqref{system_semilinear} with $f=0$ minimizing $\mathcal{J}$ defined by \eqref{Minimize_J}. Then, $E(y_{k+1}, v_{k+1})$ is expressed explicitly in terms of $f(y_{k'})$ and $f'(y_{k'})$, $0 \leq k'\leq k$:
\begin{equation}\label{Formule_E}
E(y_{k+1}, v_{k+1}) = \frac{1}{2}\left\|\left(\prod\limits_{i=0}^k(1-\lambda_i)\right)f(y_0) + \left(\sum\limits_{j=0}^{k-1} G_j(\lambda_j) \prod\limits_{i=j+1}^k (1-\lambda_i)\right)+ G_k(\lambda_k)\right\|_{L^2(Q_T)}^2
\end{equation}
where
\begin{equation*}
G_k(\lambda) = f(y_k-\lambda Y_k^1) - f(y_k) + \lambda f'(y_k)Y_k^1.
\end{equation*}
\end{lemme}
 
\begin{proof}
By induction, let us prove that
\begin{equation}\label{Formule_E1}
\partial_{tt}y_{k+1}-\partial_{xx}y_{k+1} + f(y_{k+1}) = \left(\prod\limits_{i=0}^k(1-\lambda_i)\right)f(y_0) + \left(\sum\limits_{j=0}^{k-1} G_j(\lambda_j) \prod\limits_{i=j+1}^k (1-\lambda_i)\right)+ G_k(\lambda_k).
\end{equation}
For $k=0$, we have
\begin{equation*}
\begin{aligned}
\partial_{tt}y_1-\partial_{xx}y_1 + f(y_1) 
&=  \partial_{tt}\left(y_0 - \lambda_0 Y_0\right)-\partial_{xx}\left(y_0 - \lambda_0 Y_0\right) + f\left(y_0 - \lambda_0 Y_0\right) 
\\ & = \partial_{tt}y_0 - \partial_{xx}y_0 + f(y_0) - \lambda_0\left(\partial_{tt}Y_0 - \partial_{xx}Y_0 + f'(y_0)Y_0 \right) + G_0(\lambda_0)
\\ & = f(y_0) - \lambda_0\big(\partial_{tt}y_0 - \partial_{xx}y_0 + f(y_0) \big) + G_0(\lambda_0)
\\ & = (1-\lambda_0)f(y_0) + G_0(\lambda_0).
\end{aligned}
\end{equation*}
Assume \eqref{Formule_E} for some $k\in\mathbb{N}$. Then,
\begin{equation*}
\begin{aligned}
\partial_{tt}y_{k+1}&-\partial_{xx}y_{k+1} + f(y_{k+1}) 
= \partial_{tt}\left(y_{k}- \lambda_kY_k\right)-\partial_{xx}\left(y_{k}- \lambda_kY_k\right) + f\big(y_{k}- \lambda_kY_k\big) 
\\ &= \partial_{tt}y_{k}-\partial_{xx}y_{k} + f(y_{k}) - \lambda_k\big(\partial_{tt}Y_k - \partial_{xx}Y_k + f'(y_k)Y_k \big) + G_k(\lambda_k) 
\\ &= \partial_{tt}y_{k}-\partial_{xx}y_{k} + f(y_{k}) - \lambda_k\big(\partial_{tt}y_{k}-\partial_{xx}y_{k} + f(y_{k}) \big) + G_k(\lambda_k) 
\\ &= (1-\lambda_k)\left(\partial_{tt}y_{k}-\partial_{xx}y_{k} + f(y_{k})\right)  + G_k(\lambda_k) 
\\& =(1-\lambda_k)\left(\left(\prod\limits_{i=0}^{k-1}(1-\lambda_i)\right)f(y_0) + \left(\sum\limits_{j=0}^{k-2} G_j(\lambda_j) \prod\limits_{i=j+1}^{k-1} (1-\lambda_i)\right)+ G_{k-1}(\lambda_{k-1})\right) + G_k(\lambda_k) 
\\& = \left(\prod\limits_{i=0}^{k}(1-\lambda_i)\right)f(y_0) + \left(\sum\limits_{j=0}^{k-2} G_j(\lambda_j) \prod\limits_{i=j+1}^{k} (1-\lambda_i)\right)+ (1-\lambda_k)G_{k-1}(\lambda_{k-1}) + G_k(\lambda_k) 
\\& = \left(\prod\limits_{i=0}^{k}(1-\lambda_i)\right)f(y_0) + \left(\sum\limits_{j=0}^{k-1} G_j(\lambda_j) \prod\limits_{i=j+1}^{k} (1-\lambda_i)\right) + G_k(\lambda_k).
\end{aligned}
\end{equation*}
Therefore \eqref{Formule_E1} is true for any $n\in\mathbb{N}$. Since $E(y_k, v_k) = \left\| \partial_{tt} y_k - \partial_{xx} y_k + f(y_k) \right\|_{L^2(Q_T)}^2$ the result follows.
\end{proof}

\begin{remark}
Lemma \ref{Lem_sim_num} avoids second differentiation in time and space in the evaluation of $E(y_{k+1}, v_{k+1})$, for all $k\in\mathbb{N}$.
\end{remark}

\subsection{Discretization by a conformal space-time finite elements method}\label{Section_4.2_1}

To compute $(y_0, v_0)$ and $(Y_k, V_k)$ for any $k\in\mathbb{N}$, we discretize the variational formulation \eqref{FV_en_p} by using a conformal space-time finite elements method, we refer to \cite[Section $5.1$]{bhandari2023exact} and \cite{cindea2015mixed}. We introduce a triangulation $\mathrm{T_h}$ of $Q_T$ such that $\overline{Q_T} = \cup_{K\in \mathrm{T_h}} K$. We assume that $\{\mathrm{T_h}\}_{h>0}$ is a regular family. We approximate the variable $p$ by the variable $p_h$ in the finite-dimensional space
\begin{equation*}
  P_h := \left\{ p_h\in \mathcal{C}^1\left(\overline{Q_T}\right); \ {p_h}_{|K}\in \mathbb{P}(K) \text{ for all } k\in \mathcal{T}_h \right\} \subset \Phi
\end{equation*}
where $\mathbb{P}(K)$ denotes the reduced Hsieh-Clough-Tocher (HCT) $\mathcal{C}^1$-finite element (see \cite[p. $340$]{ciarlet2002finite})
and the controlled solution $y$ are approximated by $y_h$ in the  finite-dimensional space
\begin{equation*}
Q_h := \left\{ q_h\in \mathcal{C}^0\left(\overline{Q_T}\right); \ {q_h}_{|K}\in \mathbb{Q}(K) \text{ for all } k\in \mathcal{T}_h \right\} \subset L^2(Q_T)
\end{equation*}
where $\mathbb{Q}(K)$ denotes the space of polynomials of degree one.

\subsection{Experiments}\label{Section_4.2}

We use a regular space-time mesh composed of $60000$ triangles corresponding to the discretization parameter $h \approx 1.414\times 10^{-2}$ and we denote by 
\begin{equation*}
k^\star := \min\limits_{k\geq 0}\left\{ \sqrt{2E(y_k, v_k)} > 10^{-5} \right\} +1
\end{equation*}
the number of iterations obtained when the least-squares algorithm stops. We set the controllability time equal to $T=3$, the final data to $(z_0, z_1)=(0,0)$ and we define the following non-linear function $f$
\begin{equation}\label{NL_f}
f(r) = c_f r\ln^{2}(1+|r|), \hspace{1cm} \forall r\in\mathbb{R},
\end{equation}
for $c_f\in \mathbb{R}$. In particular, $f\in W_\alpha$ with $\alpha=1$ and $f'$ satisfies \eqref{Hyp_H2}.
Note that the unfavorable case in which the norm of the corresponding uncontrolled solution of \eqref{system_semilinear} grows corresponds to negative values of the parameter $c_f$.  Finally, we define the following cut-off function
\begin{equation*}
\eta(t) = \frac{e^{-\frac{1}{2(t+10^{-6})}}e^{-\frac{1}{2(T-t+10^{-6})}}}{e^{-\frac{1}{T+10^{-6}}}e^{-\frac{1}{T+10^{-6}}}}.
\end{equation*}

\paragraph{I. Experiments with the initial state $u_0(x) = c_{u_0}\left(\cos(\pi x)-1\right)$, $c_{u_0}\in \mathbb{R}$, and $u_1(x) = 0$ in $\Omega$.}

\subparagraph{I-a. We fix $(c_f, c_{u_0})$.} We compute the sequence $(y_k, v_k)_{k\in\mathbb{N}}$ given by the least-squares algorithm associated with the fixed parameters $c_f = -1$ and $c_{u_0} = 20$. The convergence of the algorithm is observed after $k^\star = 13$ iterations.  Table \ref{Table1} collects some numerical values with respect to the iterations $k$. Figure \ref{Fig1} represents the evolution of the error $\sqrt{E(y_k, v_k)}$ as well as the optimal steps $\lambda_k$ with respect to  $k$. As expected, the sequence $(\lambda_k)_{k\in\mathbb{N}^\star}$ converges to $1$ and we observe the change in the convergence speed after a finite number of iterations: first, the optimal step is close to zero and the error decreases linearly and then, after $10$ iterations, the optimal step reaches $1$ while the error decreases quadratically. Finally, the controlled solution $y_{k^\star}$ obtained is shown in Figure \ref{Fig2} as well as its associated control $v_{k^\star}$.

\begin{table}[!h]
\center
\begin{small}
\begin{tabular}{|c|c|c|c|c|c|c|c|c|c|}
  \hline
  $k$ & $\sqrt{2E(y_k, v_k)}$ & $\lambda_k$ & $\|y_{k}\|_{L^2(Q_T)}$ & $\|v_{k}\|_{L^2(0,T)}$ & $\frac{\|y_{k} - y_{k-1}\|_{L^2(Q_T)}}{\|y_{k-1}\|_{L^2(Q_T)}}$ & $\frac{\|v_{k} - v_{k -1}\|_{L^2(0,T)}}{\|v_{k - 1}\|_{L^2(0,T)}}$ \\
  \hline
   $0$& $1.915\times 10^{2}$  & $-$ & $1.882\times 10^1$ & $2.873\times 10^1$ & $-$ & $-$  \\
   $1$& $1.798\times 10^{2}$ & $1.067\times 10^{-1}$ & $2.125\times 10^1$ & $2.471\times 10^1$ & $4.926\times 10^{-1}$ & $1.491$  \\
   $2$& $1.617\times 10^{2}$ & $1.068\times 10^{-1}$ & $2.714\times 10^1$ & $6.360\times 10^1$ & $5.618\times 10^{-1}$ & $1.932$  \\
   $3$& $1.474\times 10^{2}$ & $1.497\times 10^{-1}$ & $3.476\times 10^1$ & $1.219\times 10^2$ & $4.566\times 10^{-1}$ & $1.004$ \\
   $4$& $1.320\times 10^{2}$ & $1.481\times 10^{-1}$ & $4.296\times 10^1$ & $1.755\times 10^2$ & $3.525\times 10^{-1}$ & $4.936\times 10^{-1}$ \\
   $5$& $1.195\times 10^2$ & $1.480\times 10^{-1}$ & $5.177\times 10^1$ & $2.173\times 10^2$ & $2.806\times 10^{-1}$ & $3.298\times 10^{-1}$ \\
   $6$& $1.077\times 10^2$ & $1.658\times 10^{-1}$ & $6.125\times 10^1$ & $2.573\times 10^2$ & $2.357\times 10^{-1}$ & $3.005\times 10^{-1}$ \\
   $7$& $9.502\times 10^1$ & $2.039\times 10^{-1}$ & $7.161\times 10^1$ & $3.043\times 10^2$ & $2.082\times 10^{-1}$ & $3.048\times 10^{-1}$ \\
   $8$& $8.030\times 10^1$ & $2.721\times 10^{-1}$ & $8.300\times 10^1$ & $3.659\times 10^2$ & $1.900\times 10^{-1}$ & $3.129\times 10^{-1}$ \\
   $9$& $6.191\times 10^1$ & $4.013\times 10^{-1}$ & $9.547\times 10^1$ & $4.473\times 10^2$ & $1.764\times 10^{-1}$ & $3.136\times 10^{-1}$ \\
   $10$& $3.638\times 10^1$ & $6.966\times 10^{-1}$ & $1.087\times 10^2$ & $5.489\times 10^2$ & $1.614\times 10^{-1}$ & $2.977\times 10^{-1}$ \\
   $11$& $4.818$ & $1$ & $1.145\times 10^2$ & $6.001\times 10^2$ & $6.512\times 10^{-2}$ & $1.235\times 10^{-1}$ \\
   $12$& $3.146\times 10^{-3}$ & $1$ & $1.145\times 10^2$ & $6.003\times 10^2$ & $1.156\times 10^{-3}$ & $2.428\times 10^{-3}$ \\
   $13$& $2.409\times 10^{-8}$ & $1$ & $1.145\times 10^2$ & $6.003\times 10^2$ & $2.826\times 10^{-7}$ & $6.473\times 10^{-7}$ \\
\hline
\end{tabular}
\end{small}
\caption{$c_{u_0} = 20$ and $c_f = -1$. Some norms with respect to the iterations $k$.}\label{Table1}
\end{table}

\begin{figure}[h!]
\centering
\includegraphics[scale=0.48]{"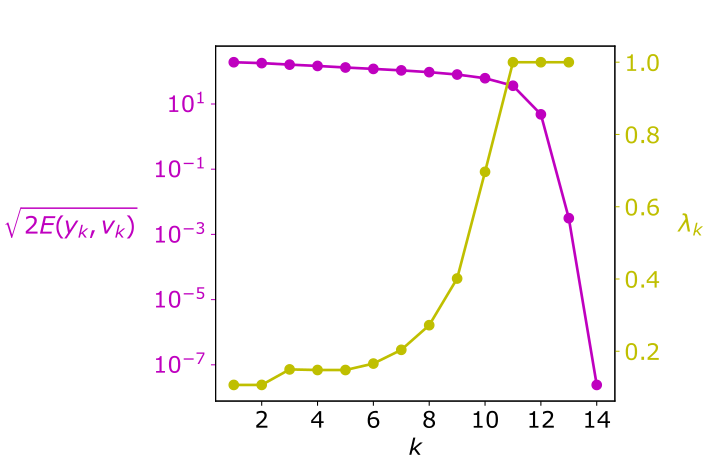"}
\caption{$c_{u_0} = 20$ and $c_f = -1$. Evolution of $\sqrt{2E(y_k, v_k)}$ and $\lambda_k$ with respect to the iterations $k$.} 
\label{Fig1}
\end{figure}

\begin{figure}[h!]
\centering
\includegraphics[scale=0.24]{"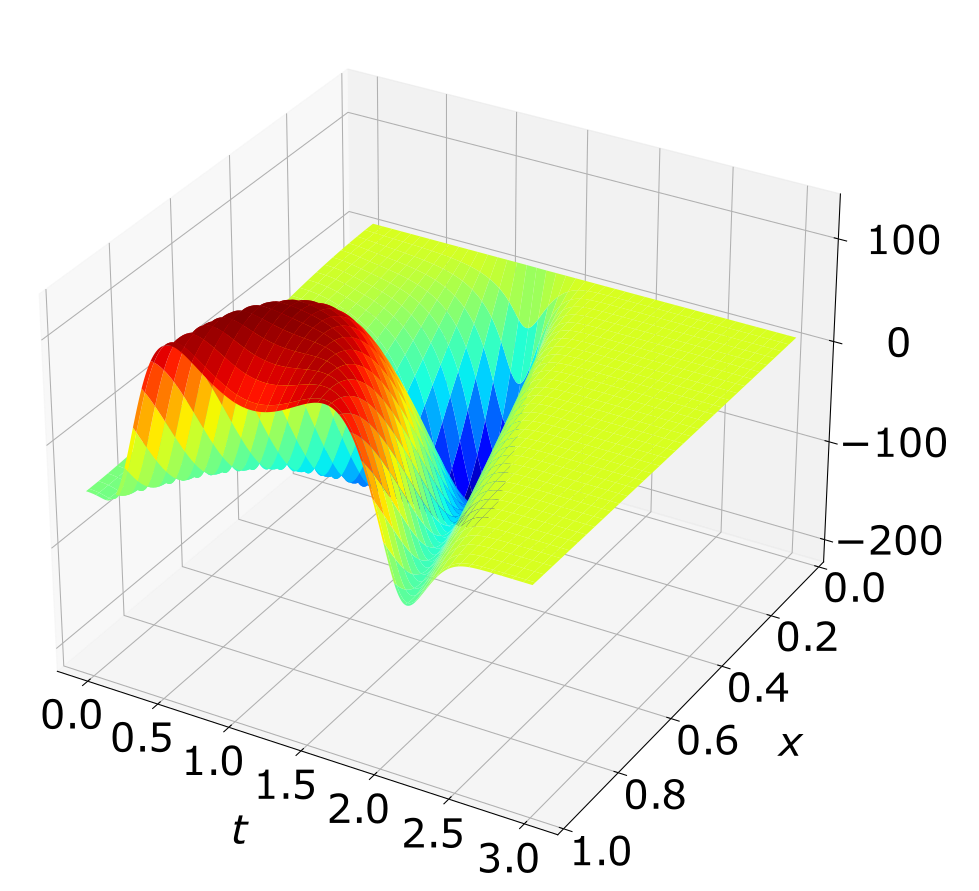"}
\hspace{0.5cm}\includegraphics[scale=0.48]{"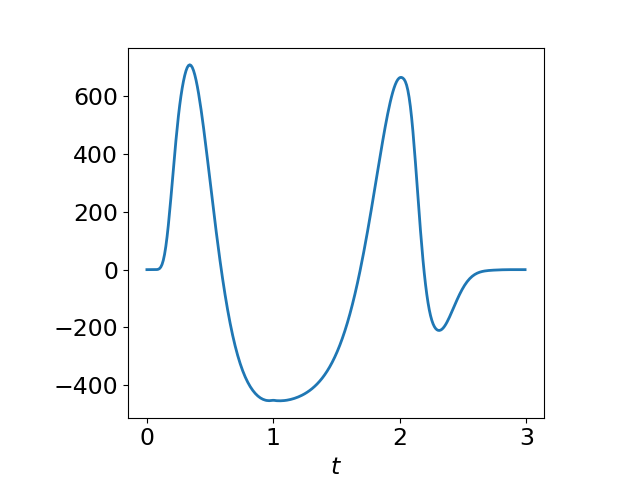"}\caption{$c_{u_0}= 20$ and $c_f = -1$. \textbf{Left:} Representation of the semi-linear controlled solution $y_{k^\star}$ in the space-time domain $Q_T$. \textbf{Right:} Representation of the control $v_{k^\star}$ on $(0,T)$.}
\label{Fig2}
\end{figure}

\subparagraph{I-b. We fix $c_{u_0}$ and we consider several values for $c_f$.} We fix $c_{u_0} = 20$ and we compute $(y_k, v_k)_{k\in\mathbb{N}}$ for several values of $c_f$. Table \ref{ABC} collects the results. Remark that the algorithm fails to converge when $c_f$ is large which is in agreement with our theoretical result that the constant $c_f$ should be small enough.  As expected, the $\|\cdot\|_{L^2(Q_T)}$-norm of $y_{k^\star}$ increases (and thus also the $\|\cdot\|_{L^2(0,T)}$-norm of $v_{k^\star}$) with the absolute value of $c_f$. In particular, due to the non-linearity, for a given $|c_f|$, much more iterations are required in the unfavorable case ($c_f<0$) than in the favorable case ($c_f>0$).

\begin{table}[!ht]
\center
\begin{small}
\begin{tabular}{|c|c|c|c|c|c|c|c|c|c|}
  \hline
  $c_f$ & $\sqrt{2E(y_{k^\star}, v_{k^\star})}$ & $\|y_{k^\star}\|_{L^2(Q_T)}$ & $\|v_{k^\star}\|_{L^2(0,T)}$ & $\frac{\|y_{k^\star} - y_{k^\star-1}\|_{L^2(Q_T)}}{\|y_{k^\star-1}\|_{L^2(Q_T)}}$ & $\frac{\|v_{k^\star} - v_{k^\star -1}\|_{L^2(0,T)}}{\|v_{k^\star - 1}\|_{L^2(0,T)}}$ & $k^\star$ \\
  \hline
    $10$ & $5.461\times 10^{-9}$ & $3.635\times 10^1$ & $1.547\times 10^3$ & $4.805\times 10^{-7}$ & $6.015\times 10^{-7}$ & $27$ \\
   $5$ & $1.726\times 10^{-9}$ & $2.478\times 10^1$ & $3.449\times 10^2$ & $9.589\times 10^{-8}$ & $1.654\times 10^{-8}$ & $9$ \\
   $2$& $5.754\times 10^{-10}$ & $2.025\times 10^1$ & $8.554\times 10^1$ & $4.392\times 10^{-8}$ & $2.470\times 10^{-7}$ & $7$\\
   $1$ & $2.512\times 10^{-8}$ & $2.099\times 10^1$ & $5.895\times 10^1$ & $5.857\times 10^{-6}$ & $2.399\times 10^{-5}$ & $5$ \\
   $-0.5$& $3.656\times 10^{-6}$ & $4.195\times 10^1$ & $1.355\times 10^2$ & $9.689\times 10^{-5}$ & $2.928\times 10^{-4}$ & $5$\\
  $-1$ & $2.409\times 10^{-8}$ & $1.145\times 10^{2}$ & $6.003\times 10^{2}$ & $2.826\times 10^{-7}$ & $6.473\times 10^{-7}$ & $13$\\
   $-1.5$ & $3.314\times 10^{-8}$ & $3.332\times 10^2$ & $2.541\times 10^{3}$ & $4.538\times 10^{-8}$ & $1.635\times 10^{-7}$ & $40$ \\
   $-2$ & $1.217\times 10^{-9}$ & $9.982\times 10^2$ & $1.110\times 10^4$ & $5.914\times 10^{-10}$ & $1.408\times 10^{-9}$ & $143$\\    
  \hline
\end{tabular}
\end{small}
\caption{$c_{u_0} = 20$. Some norms with respect to the parameter $c_f$.}\label{ABC}
\end{table}

\subparagraph{I-c. We fix $c_f$ and we consider several values of $c_{u_0}$.} In this case, we fix $c_{f} = -1$ and we compute $(y_k, v_k)_{k\in\mathbb{N}}$ for several values of $c_u$. Table \ref{Table3} collects the results. We observe that the norm of the control and the controlled solution increase with $|c_{u_0}|$. Moreover, as expected, the algorithm converges even for large values of $c_{u_0}$.
\vspace{0.3cm}

\begin{table}[!h]\label{Table3}
\center
\begin{small}
\begin{tabular}{|c|c|c|c|c|c|c|c|c|c|}
  \hline
  $c_{u_0}$ & $\sqrt{2E(y_{k^\star}, v_{k^\star})}$ & $\|y_{k^\star}\|_{L^2(Q_T)}$ & $\|v_{k^\star}\|_{L^2(0,T)}$ & $\frac{\|y_{k^\star} - y_{k^\star-1}\|_{L^2(Q_T)}}{\|y_{k^\star-1}\|_{L^2(Q_T)}}$ & $\frac{\|v_{k^\star} - v_{k^\star -1}\|_{L^2(0,T)}}{\|v_{k^\star - 1}\|_{L^2(0,T)}}$ & $k^\star$ \\
  \hline
   $1$ & $7.081\times 10^{-6}$ & $1.005$ & $1.326$ & $3.582\times 10^{-3}$ & $1.150\times 10^{-2}$ & $2$\\
   $50$ & $6.893\times 10^{-10}$ & $8.640\times 10^2$ & $6.700\times 10^3$ & $4.838\times 10^{-10}$ & $1.670\times 10^{-9}$ & $31$\\
   $100$ & $9.124\times 10^{-11}$ & $4.128\times 10^{3}$ & $4.399\times 10^4$ & $2.358\times 10^{-11}$ & $4.319\times 10^{-11}$ & $62$\\
   $500$ & $1.865\times 10^{-8}$ & $1.891\times 10^5$ & $2.974\times 10^6$ & $8.448\times 10^{-11}$ & $1.034\times 10^{-10}$ & $444$\\
  \hline
\end{tabular}
\end{small}
\caption{$c_{f} = -1$. Some norms with respect to the parameter $c_{u_0}$.}\label{Table3}
\end{table}

\subparagraph{I-d. Influence of the non-linearity $f$.}
In this case, we fix $c_{u_0} = 1$. For $c_f\in \left\{-1, -2, -4 \right\}$, Figure \ref{Fig3} represents, as a function of time, the $L^2(\Omega)$-norm of the uncontrolled solution $y^\star(\cdot, t)$, the $L^2(\Omega)$-norm of the linear controlled solution $y_0(\cdot, t)$ (used to initialize the algorithm) and the $L^2(\Omega)$-norm of the controlled solution $y_{k^\star}(\cdot, t)$ obtained by the least-squares algorithm. The linear control $v_0$ associated to $y_0$ and the control $v_{k^\star}$ associated with $y_{k^\star}$ are also depicted.
\begin{itemize}
\item For $c_f=-1$. The uncontrolled solution $y^\star$ oscillates and is bounded. The convergence of the least-squares algorithm is quadratic and is observed after $k^\star = 2$ iterations. The dynamic of the initial state-control pair $(y_0, v_0)$ and the final state-control pair $(y_{k^\star}, v_{k^\star})$ are similar: the non-linearity $f$ therefore has a reduced impact. 
\item For $c_f=-2$. The uncontrolled solution $y^\star$ oscillates more than the previous case and is still bounded. The convergence of the least-squares algorithm is again quadratic and is observed after $k^\star = 3$ iterations. The initial and final dynamics are still similar.
\item For $c_f=-4$. In this case, the uncontrolled solution grows exponentially with respect to the time variable. The algorithm converges again with $k^\star = 5$. The non-linearity $f$ has a strong impact: the controls $v_{0}$ and $v_{k^\star}$ no longer match. In particular, the final control $v_{k^\star}$ acts very strongly at the beginning to balance the exponential growth of the uncontrolled solution.
\end{itemize}
As expected, there is a large gap between the initial control $v_0$ and the final control $v_{k^\star}$ as $|c_f|$ increases.

\begin{table}[!h]
\center
\begin{small}
\begin{tabular}{c c c}
\rotatebox{90}{ \hspace*{2cm} {\large $c_{f} = -1$}} & \includegraphics[scale=0.48]{"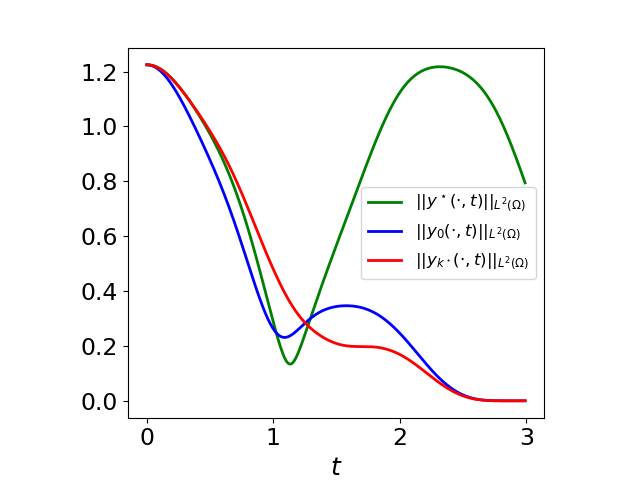"} & \includegraphics[scale=0.48]{"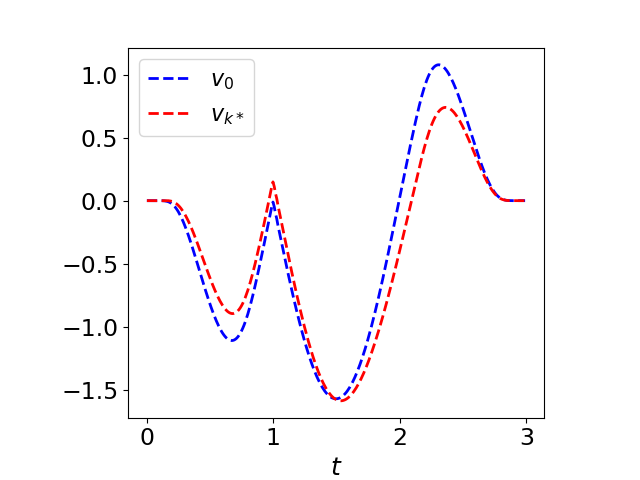"}\\
\rotatebox{90}{ \hspace*{2cm} {\large $c_{f} = -2$}} &\includegraphics[scale=0.48]{"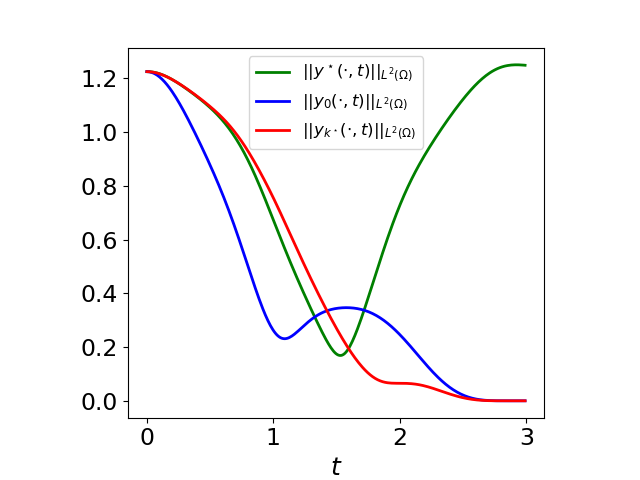"} & \includegraphics[scale=0.48]{"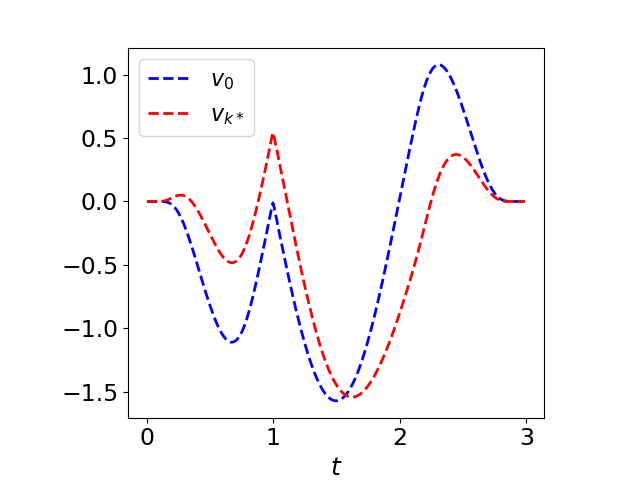"}\\
\rotatebox{90}{ \hspace*{2cm} {\large $c_{f} = -4$}} &\includegraphics[scale=0.48]{"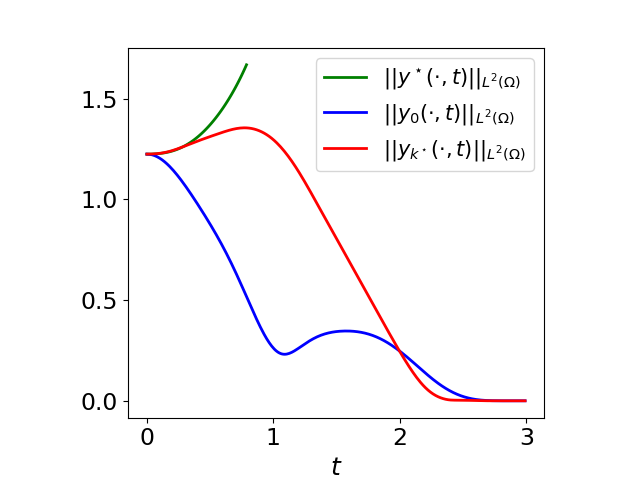"} & \includegraphics[scale=0.48]{"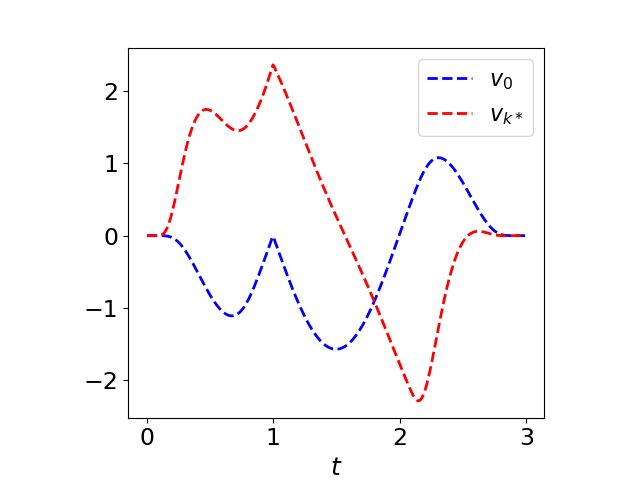"}
\end{tabular}
\end{small}
\captionof{figure}{$c_{u_0} = 1$. \textbf{Right:} Representation of the $\left\|\cdot\right\|_{L^2(\Omega)}$-norm of the uncontrolled solution $y^\star(\cdot, t)$, the linear controlled solution $y_0(\cdot,t)$ used to the initialization and the solution $y_{k^\star}$ obtained by the least-squares algorithm with respect to $t$, for $c_{f}\in \{-1, -2, -4\}$. \textbf{Left:} Representation of the linear control $v_0$ associated with $y_0$ and the control $v_{k^\star}$ associated with $y_{k^\star}$ with respect to $t$, for $c_{f}\in \{-1, -2, -4\}$. }\label{Fig3}
\end{table}

\paragraph{II. Comparison with other algorithms.} 

\subparagraph{II-a. Newton algorithm.} 
When $\lambda_k = 1$ for all $k\in\mathbb{N}$, the least-squares algorithm given by \eqref{LS_algo} coincides with the Newton algorithm $($N$)$ associated with $F: \mathcal{A} \rightarrow L^2(Q_T)$, $F(y,v) := \partial_{tt}y-\partial_{xx}y +f(y)$. In particular, this explains the super-linear convergence property obtained in Theorem \ref{Th_Convergence_results} and numerically illustrated in the first case (see Figure \ref{Fig1}).

We consider $u_0 = 20\left(\cos(\pi x) - 1\right)$ and $u_1 = 0$. For several values of the parameter $c_f$, we compute $(y_{k^\star}, v_{k^\star})_{k\in\mathbb{N}}$ with $\lambda_k = 1$ for all $k\in\mathbb{N}$. Table \ref{ABCD} collects some norms with respect to $k$. With this set of data, we observe that the Newton method converges in fewer iterations than least-squares algorithm (see Table \ref{ABCD} and Table \ref{ABC}).

\begin{table}[!ht]
\center
\begin{small}
\begin{tabular}{|c|c|c|c|c|c|c|c|c|c|}
  \hline
  $c_f$ & $\sqrt{2E(y_{k^\star}, v_{k^\star})}$ & $\|y_{k^\star}\|_{L^2(Q_T)}$ & $\|v_{k^\star}\|_{L^2(0,T)}$ & $\frac{\|y_{k^\star} - y_{k^\star-1}\|_{L^2(Q_T)}}{\|y_{k^\star-1}\|_{L^2(Q_T)}}$ & $\frac{\|v_{k^\star} - v_{k^\star -1}\|_{L^2(0,T)}}{\|v_{k^\star - 1}\|_{L^2(0,T)}}$ & $k^\star$ \\
  \hline
    $10$ & $1.209\times 10^{-9}$ & $4.062\times 10^1$ & $1.883\times 10^3$ & $3.655\times 10^{-7}$ & $3.869\times 10^{-7}$ & $11$ \\
   $5$ & $1.881\times 10^{-11}$ & $3.447\times 10^1$ & $8.185\times 10^2$ & $7.377\times 10^{-8}$ & $4.675\times 10^{-8}$ & $8$ \\
   $2$& $2.549\times 10^{-8}$ & $2.100\times 10^1$ & $9.062\times 10^1$ & $ 8.275\times 10^{-6}$ & $3.373\times 10^{-5}$ & $6$ \\
   $1$ & $6.094\times 10^{-10}$ & $2.087\times 10^1$ & $5.947\times 10^1$ & $ 1.891\times 10^{-6}$ & $1.130\times 10^{-5}$ & $5$ \\
   $-0.5$ & $4.455\times 10^{-6}$ & $4.381\times 10^1$ & $1.418\times 10^2$ & $1.281\times 10^{-4}$ & $2.377\times 10^{-4}$ & $4$\\
  $-1$ & $4.011\times 10^{-6}$ & $1.118\times 10^2$ & $5.879\times 10^2$ & $4.144\times 10^{-5}$ & $1.074\times 10^{-4}$ & $5$\\
   $-1.5$ & $2.321\times 10^{-12}$ & $3.298\times 10^2$ & $2.590\times 10^3$ & $1.967\times 10^{-9}$ & $7.702\times 10^{-9}$ & $6$ \\
   $-2$ & $9.727\times 10^{-12}$ & $9.451\times 10^{2}$ & $1.083\times 10^4$ & $3.816\times 10^{-10}$ & $1.076\times 10^{-9}$ & $7$\\ 
  \hline
\end{tabular}
\end{small}
\caption{$c_{u_0} = 20$ and $\lambda_k = 1$ for all $k\in\mathbb{N}$. Some norms of the solution $y_{k^\star}$ obtained with the least-squares algorithm as well as the associated control $v_{k^\star}$ with respect to the parameter $c_f$. }\label{ABCD}
\end{table}

Now, we fix the parameter $c_f=-2$ and we consider the initial state $(u_0, u_1) =  \left( 50\left(\cos(\pi x) - 1\right), 100 \cdot\mathbf{1}_{(\frac{1}{2}; 1]} \right)$. In particular, we observe that the Newton algorithm (when $\lambda_k = 1$ for all $k\in\mathbb{N}$) diverges: the norms of the solution and control are blowing up as number of iterations are increasing. Remark that the least-squares algorithm converges in $k^\star = 673$ iterations. 

\newpage

\paragraph{II-b. Fixed-point methods.}
 We consider
\begin{equation*}
u_0(x) = 100(x-\frac{1}{2})\cdot\mathbf{1}_{(\frac{1}{2}, 1]}, \hspace{1cm} u_1(x) = 100\cdot\mathbf{1}_{(\frac{1}{2}, 1]}.
\end{equation*}
For several values of $c_f$, we compare the least-squares algorithm with two fixed-point methods. The first one is the fixed-point method associated with the operator $K$ defined by \eqref{Def_fixed_point_operator_K1} in Section \ref{Section_3.2}. This leads to the algorithm \eqref{PF1}. The second one is associated with the operator $\widetilde{K} : L^\infty(Q_T) \rightarrow L^\infty(Q_T)$ defined by $y = \widetilde{K}(\xi)$ where $y$ is a controlled solution of
\begin{equation}\label{tilde_K}
\left\lbrace 
    	\begin{aligned}
    	&\partial_{tt} y - \partial_{xx} y = -f(\xi), &  Q_T, \\ &y(0,\cdot) = 0, \ \ \partial_x y(1,\cdot) = v,   & (0,T) , \\ & \big( y(\cdot,0),\partial_t y(\cdot, 0)\big)  = (u_0, u_1), & \Omega,
    	\end{aligned}
    	\right.
\end{equation} 
given by Theorem \ref{Th.8}. This leads to the following algorithm:
\begin{equation}\tag{$$PF2$$}\label{PF2}
y_0 \in L^\infty(Q_T), \hspace{1cm}  y_{k+1} = \widetilde{K}(y_k), \ k\geq 0.
\end{equation}
Table \ref{Table_1_compare_LS_PF1_PF2}, Table \ref{Table_2_compare_LS_PF1_PF2} and Table \ref{Table_3_compare_LS_PF1_PF2} collect some norms of the sequence $(y_{k},v_{k})_{k\in\mathbb{N}}$ computed by the three algorithms \eqref{LS_algo}, \eqref{PF1} and \eqref{PF2} for respectively $c_f = -0.5$, $c_f = -1$ and $c_f=-2$. Figure \ref{Figure_compare_LS_PF1_PF2} represents the evolution of $\sqrt{2E(y_k, v_k)}$ with respect to the iterations $k$. In particular, $\eqref{PF1}$ and $\eqref{PF2}$ do not usually converge, but if they do, convergence is linear.

\vspace{0.3cm}

\begin{table}[!ht]
\center
\begin{small}
\begin{tabular}{|c|c|c|c|c|c|c|c|c|c|c|}
  \hline
   & $\sqrt{2E(y_{k}, v_{k})}$ & $\|y_{k}\|_{L^2(Q_T)}$ & $\|v_{k}\|_{L^2(0,T)}$ & $\frac{\|y_{k} - y_{k-1}\|_{L^2(Q_T)}}{\|y_{k-1}\|_{L^2(Q_T)}}$ & $\frac{\|v_{k} - v_{k -1}\|_{L^2(0,T)}}{\|v_{k - 1}\|_{L^2(0,T)}}$ & $k^\star$ \\
  \hline
   \eqref{LS_algo} & $2.305\times 10^{-8}$ & $8.394\times 10^1$ & $3.800\times 10^2$ & $2.457\times 10^{-7}$  & $1.446\times 10^{-6}$  & $5$\\ 
   \eqref{PF1} & $7.941\times 10^{-6}$ & $9.614\times 10^1$ & $4.227\times 10^2$ & $2.228\times 10^{-8}$ & $1.878\times 10^{-8}$ & $16$\\ 
   \eqref{PF2} & $9.938\times 10^{-6}$ & $2.338\times 10^2$ & $1.027\times 10^3$ & $2.244\times 10^{-9}$ & $3.254\times 10^{-9}$ & $79$ \\ 
  \hline
\end{tabular}
\end{small}
\caption{$c_f = -0.5$. Some norms obtained by algorithms \eqref{LS_algo}, \eqref{PF1} and \eqref{PF2}.}\label{Table_1_compare_LS_PF1_PF2}
\end{table}
\begin{table}[!ht]
\center
\begin{small}
\begin{tabular}{|c|c|c|c|c|c|c|c|c|c|c|}
  \hline
   & $\sqrt{2E(y_{k}, v_{k})}$ & $\|y_{k}\|_{L^2(Q_T)}$ & $\|v_{k}\|_{L^2(0,T)}$ & $\frac{\|y_{k} - y_{k-1}\|_{L^2(Q_T)}}{\|y_{k-1}\|_{L^2(Q_T)}}$ & $\frac{\|v_{k} - v_{k -1}\|_{L^2(0,T)}}{\|v_{k - 1}\|_{L^2(0,T)}}$ & $k^\star$ \\
  \hline
   \eqref{LS_algo} & $1.518\times 10^{-8}$ & $2.243\times 10^2$ & $1.625\times 10^3$ & $1.344\times 10{-7}$ & $2.183\times 10^{-7}$ & $10$ \\ 
   \eqref{PF1} & $3.917\times 10^{-6}$ & $2.623\times 10^2$ & $1.931\times 10^3$ & $1.478\times 10^{-9}$ & $6.534\times 10^{-10}$ & $28$ \\ 
   \eqref{PF2} & $-$ & $-$ & $-$ & $-$ & $-$ & $+\infty$ \\ 
  \hline
\end{tabular}
\end{small}
\caption{$c_f = -1$. Some norms obtained by algorithms \eqref{LS_algo} and \eqref{PF1}.}\label{Table_2_compare_LS_PF1_PF2}
\end{table}
\begin{table}[!ht]
\center
\begin{small}
\begin{tabular}{|c|c|c|c|c|c|c|c|c|c|c|}
  \hline
   & $\sqrt{2E(y_{k}, v_{k})}$ & $\|y_{k}\|_{L^2(Q_T)}$ & $\|v_{k}\|_{L^2(0,T)}$ & $\frac{\|y_{k} - y_{k-1}\|_{L^2(Q_T)}}{\|y_{k-1}\|_{L^2(Q_T)}}$ & $\frac{\|v_{k} - v_{k -1}\|_{L^2(0,T)}}{\|v_{k - 1}\|_{L^2(0,T)}}$ & $k^\star$ \\
  \hline
   \eqref{LS_algo} & $2.482\times 10^{-8}$ & $1.408\times 10^{3}$ & $1.548\times 10^4$ & $3.769\times 10^{-8}$ & $5.090\times 10^{-8}$ & $76$\\ 
   \eqref{PF1} & $-$ & $-$ & $-$ & $-$ & $-$ & $+\infty$ \\ 
   \eqref{PF2} & $-$ & $-$ & $-$ & $-$ & $-$ & $+\infty$ \\ 
  \hline
\end{tabular}
\end{small}
\caption{$c_f = -2$. Some norms obtained by algorithms \eqref{LS_algo}.}\label{Table_3_compare_LS_PF1_PF2}
\end{table}

\begin{figure}[h!]
\centering
\includegraphics[scale=0.48]{"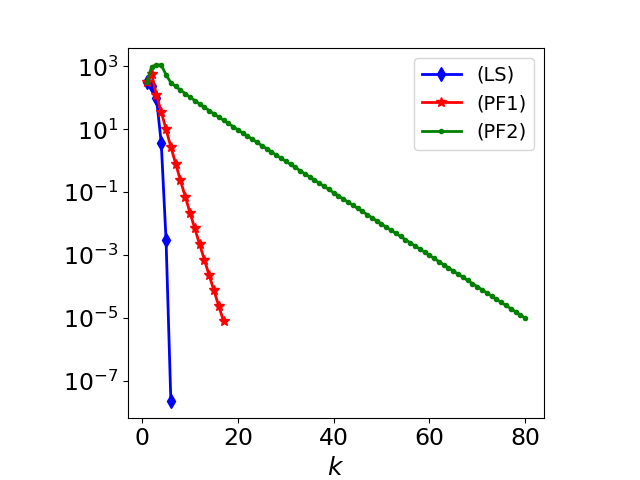"}
\caption{$c_f = -0.5$. Evolution of $\sqrt{2E(y_k, v_k)}$ for \eqref{LS_algo}, \eqref{PF1} and \eqref{PF2} with respect to the iterations $k$.} 
\label{Figure_compare_LS_PF1_PF2}
\end{figure}

\section{Conclusion and perspectives}\label{Section5}

Following \cite{zuazua1993exact}, we give a generalized observability inequality with a constant expressed as an exponential of the potential. Assuming an optimal growth assumption at infinity on the non-linearity of the type $s\ln^2s$, this leads to the existence of a control $v\in L^2(0,T)$ steering the semi-linear system $\eqref{system_semilinear}$ from an initial state $(u_0, u_1)\in H^1_{(0)}(\Omega)\times L^2(\Omega)$ to the target $(z_0, z_1)\in H^1_{(0)}(\Omega)\times L^2(\Omega)$ within time $T$. Under an additional regularity assumption, we adapt the least-squares approach introduced in \cite{munch2022constructive} to boundary case leading to a convergent algorithm. In particular, the convergence is super-linear after a finite number of iterations. 

Numerical experiments are in agreement with the theoretical results. More precisely, the experiments confirm  the change in convergence speed for the least-squares algorithm and suggest that the fixed-point operator $K$ (defined by \eqref{Def_fixed_point_operator_K1}) is not contracting in general.

\vspace{0.3cm}

We conclude with some comments:

\begin{enumerate}

\item \textbf{Construction of a contracting operator.} Numerical simulations show that the fixed-point algorithm \eqref{PF2} associated with $\widetilde{K}$ diverges in general. By introducing a Carleman parameter $s>0$ large enough and weight functions $\rho$, $\rho_1$, we can expect a contraction property for the operator $\mathcal{K} : L^\infty(Q_T) \rightarrow L^\infty(Q_T)$ where $ y:= \mathcal{K} (\xi)$ is the optimal controlled solution of \eqref{tilde_K} for the cost \begin{equation*}
\widetilde{\mathcal{J}}(y,v) = \left\|\rho^{-1}(s)y\right\|_{L^2(Q_T)}^2 + s^{-1}\left\|\rho_1^{-1}(s)v\right\|_{L^2(0,T)}^2.
\end{equation*}
We refer to \cite{bhandari2023exact} and \cite{claret2023exact}.
This requires a Carleman inequality with a Neumann-type observation. 

\item \textbf{The multi-dimensional case.} The generalization of these results in the multi-dimensional case is open. First, in dimension $d>1$, the regularity of the solutions of \eqref{system_linear} depends on the domain $\Omega$ (we refer to \cite[Chapitre III, Section $2$ p.$179$-$180$]{Lions1988ControlabiliteEP} or also \cite[Theorem $1.1$ p.$52$]{LASIECKA198949}). Secondly, to expect a generalization of all these results, we need an observability inequality which holds in any dimensions. Our estimate \eqref{In.obs.g} is based on specific argument related to the one dimension and to our knowledge, there is no estimate like \eqref{In.obs.g} valid in any dimension.


\item \textbf{Inverse problems.} On account of the duality between controllability and observability, it would be interesting to analyze the potential of the least-squares approach for solving inverse problems. 

\end{enumerate}

\bibliographystyle{alpha} 
\bibliography{biblio1}

\end{document}